\newtheorem{Theorem}{Theorem}[section]
\newtheorem{Lemma}[Theorem]{Lemma}
\newtheorem{Proposition}[Theorem]{Proposition}
\newtheorem{Remark}[Theorem]{Remark}
\title[]{Planar closed curves with prescribed curvature}
\author[]{{\normalsize{Paolo Caldiroli\footnote{Email: paolo.caldiroli@unito.it, anna.capietto@unito.it. The first author is member of the Gruppo Nazionale per l'Analisi Matematica, la Probabilit\`a e le loro Applicazioni (GNAMPA) of the Istituto Nazionale di Alta Matematica (INdAM).}, Anna Capietto}}
\vspace{6pt}\\Dipartimento di Matematica, Universit\`a degli Studi di Torino\vspace{3pt}\\via Carlo Alberto 10 -- 10123 Torino, Italy\vspace{3pt}}
\date{}
\begin{document}
\begingroup
\def\uppercasenonmath#1{} 
\let\MakeUppercase\relax 
\maketitle
\endgroup

\begin{abstract}
\noindent
By variational methods, we prove existence of planar closed curves with prescribed curvature, for some classes of curvature functions. The main difficulty is to obtain bounded Palais-Smale sequences. This is achieved by adding a parameter in the problem and using a version of the ``monotonicity trick'' introduced by M. Struwe in \cite{Str88An, Str88AM}.
\smallskip

\noindent
\textit{Keywords:} {Curves in Euclidean space, Prescribed curvature, Variational methods, Lack of compactness.}
\smallskip

\noindent{{{\it 2010 Mathematics Subject Classification:} 53A04, 51M25, 53C42, 47J30}}
\end{abstract}
\section{Introduction}

The prescribed curvature problem for closed curves consists of studying existence (or non existence) of planar closed curves whose curvature at every point agrees with a prescribed value, depending on the point. 

This problem has been investigated in recent years by many authors and with different techniques, see \cite{BetCalGui02, CalCor19, CalGui07, CorMus23, GuiRol10, KirLau10, Mus11, NovVal11}. As a geometrical problem, it constitutes a toy model that could be useful to provide some information for understanding other, more difficult geometrical issues in higher dimension, e.g., the $H$-bubble problem (see the very recent paper \cite{SirWeiZenZhu23} and the references therein). Moreover, regarding applications, it is somehow related to the problem of magnetic geodesics (see, e.g., \cite{Gin04}, and also \cite{CalCor19, CalGui07} for very simple examples). 

Let us state the problem in a more precise form. A planar closed parametric curve is the image $\mathcal{C}$ of a nonconstant, periodic, continuous mapping $u\colon\mathbb{R}\to\mathbb{R}^{2}$. A parametrization $u$ of $\mathcal{C}$ is regular when $u$ is of class $C^{1}$ and $\dot{u}(t)\ne 0$ for every $t\in\mathbb{R}$. If $\mathcal{C}$ admits a regular parametrization $u$ of class $C^{2}$, the curvature of $\mathcal{C}$ is given by
\[
\kappa(t)=\frac{\ddot{u}(t)\cdot i\dot{u}(t)}{|\dot{u}(t)|^{3}}
\]
where $i$ denotes the rotation of $\frac\pi2$. Given a continuous function $K\colon\mathbb{R}^{2}\to\mathbb{R}$, we are looking for planar closed curves $\mathcal{C}$ admitting a regular parametrization $u$ of class $C^{2}$ such that $\kappa(t)=K(u(t))$ for every $t\in\mathbb{R}$. Such curves will be called \emph{$K$-loops}, for short.

When $K$ is a nonzero constant, every circumference with radius $\left|K\right|^{-1}$ is a $K$-loop. However, as soon as $K$ is non constant, existence/non existence, uniqueness/multiplicity of $K$-loops are nontrivial issues and the behaviour of the prescribed curvature function $K$ plays a decisive role. For example, when $K$ is a Lipschitz-continuous positive function depending just on one variable in a strictly monotone way, no $K$-loop exists (see \cite{KirLau10}). On the other hand, for some classes of functions $K$ which converge to a nonzero constant at infinity, existence of $K$-loops is obtained in \cite{Mus11, CorMus23}. Let us also mention the papers \cite{GuiRol10, CalCor19} which deal with the case of symmetric curvature functions, under different assumptions, and with different results and methods (see also  \cite{Mus11}).

In this work we tackle the $K$-loop problem when $K$ is periodic in its two variables. This class of curvature functions is suitable for modeling periodic heterogeneous media. Here is our first main result:

\begin{Theorem}\label{T:main}
Let $K\colon\mathbb{R}^{2}\to\mathbb{R}$ be a continuous function satisfying:
\begin{itemize}
\item[\emph{(K1)}]
there exist $a,b>0$ such that $K(x+a,y)=K(x,y+b)=K(x,y)~\forall(x,y)\in\mathbb{R}^{2}$,
\item[\emph{(K2)}]
$\int_{[0,a]\times[0,b]}K(x,y)\,dx\,dy\ne 0$.
\end{itemize}
Then for almost every $\lambda\in\mathbb{R}\setminus\{0\}$ there exists a $\lambda K$-loop.
\end{Theorem}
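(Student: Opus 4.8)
The plan is to recast the $\lambda K$-loop problem variationally and then produce critical points for almost every $\lambda$ by combining a mountain-pass scheme with Struwe's monotonicity trick. First I would introduce a potential for the curvature. Setting $\bar K:=\frac1{ab}\int_{[0,a]\times[0,b]}K$, which is nonzero by (K2), I would choose a vector field $Q\colon\mathbb{R}^2\to\mathbb{R}^2$ with $\operatorname{div}Q=K$ of the form $Q(z)=\tfrac{\bar K}{2}z+Q_{\mathrm{per}}(z)$, where $Q_{\mathrm{per}}$ is $(a,b)$-periodic with $\operatorname{div}Q_{\mathrm{per}}=K-\bar K$ (solvable since $K-\bar K$ has zero cell-average by construction). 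On the space $H:=H^1(\mathbb{S}^1,\mathbb{R}^2)$ of closed curves I define
\[
\mathcal{E}_\lambda(u)=\tfrac12\int_{\mathbb{S}^1}|\dot u|^2\,dt+\lambda\int_{\mathbb{S}^1}Q(u)\cdot i\dot u\,dt .
\]
Using the algebraic identity $[DQ]^{\top}i+i\,DQ=(\operatorname{div}Q)\,i$, the Euler--Lagrange equation is $\ddot u=\lambda K(u)\,i\dot u$, whence $\frac{d}{dt}|\dot u|^2=2\lambda K(u)\,(i\dot u)\cdot\dot u=0$, so every nonconstant critical point has constant speed. Since the geometric curvature is reparametrization invariant, I would view $\mathcal{E}_\lambda$ as the constant-speed representative of the degenerate length-plus-weighted-area functional $\int|\dot u|+\lambda\int Q(u)\cdot i\dot u$, whose nonconstant critical points are exactly the $\lambda K$-loops, and recover the loop from a critical point of $\mathcal{E}_\lambda$ by the speed normalization. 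Two structural facts will be used repeatedly: since $\int_{\mathbb{S}^1}\dot u=0$, the signed-area term $\tfrac{\bar K}{2}\int u\cdot i\dot u$ is translation invariant, and together with the periodicity of $Q_{\mathrm{per}}$ this makes $\mathcal{E}_\lambda$ invariant under the lattice action $u\mapsto u+(ma,nb)$; condition (K2) guarantees that the signed-area part is genuinely present.

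Next I would set up the min-max geometry. Evaluating $\mathcal{E}_\lambda$ on round circles of radius $R$ enclosing many periodic cells, the weighted area behaves like $\bar K\pi R^2(1+o(1))$ while the Dirichlet energy grows like a fixed multiple of $R^2$; hence for $\lambda$ with $\lambda\bar K$ negative and large in modulus, $\mathcal{E}_\lambda$ takes arbitrarily negative values, whereas $\mathcal{E}_\lambda\equiv0$ on constants and a local analysis near a constant shows $\mathcal{E}_\lambda\ge\alpha>0$ on a small sphere in $H$ (the Dirichlet term dominating the lower-order weighted-area term). This yields a mountain-pass geometry with a min-max level $c(\lambda)>0$; the opposite sign of $\lambda$ is treated symmetrically, e.g.\ by replacing $K$ with $-K$, so that both $\lambda>0$ and $\lambda<0$ are covered.

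The crux, and the step I expect to be the main obstacle, is that $\mathcal{E}_\lambda$ is \emph{not} coercive and its Palais--Smale sequences need not be bounded, so a direct mountain-pass argument fails. This is exactly where the parameter $\lambda$ is exploited: following Struwe's monotonicity trick, I would show that $\lambda\mapsto c(\lambda)$ (suitably normalized on a half-line) is monotone, hence differentiable at almost every $\lambda$. At each differentiability point $\lambda_0$, comparing almost-optimal min-max paths for nearby parameters produces a \emph{bounded} Palais--Smale sequence for $\mathcal{E}_{\lambda_0}$ at level $c(\lambda_0)$, the uniform bound arising from the finite difference quotient of $c$ controlling the weighted-area term along the path. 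It is precisely this mechanism that turns ``almost every $\lambda$'' from a technical artifact into the exact conclusion of the theorem.

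Finally I would prove compactness of bounded Palais--Smale sequences and conclude. Given a bounded $(u_n)$ with $\mathcal{E}_{\lambda_0}(u_n)\to c(\lambda_0)$ and $\mathcal{E}_{\lambda_0}'(u_n)\to0$, I first re-center using the lattice invariance, translating each $u_n$ by a vector $(m_n a,n_n b)$ so that its mean lies in $[0,a]\times[0,b]$; this keeps $(u_n)$ bounded in $C^0$ through $H^1\hookrightarrow C^0$. Passing to a subsequence, $u_n\rightharpoonup u$ in $H^1$ and $u_n\to u$ in $C^0$, so the weighted-area differential $\int K(u_n)\,i\dot u_n\cdot\varphi$ converges; the Palais--Smale condition then forces $\ddot u_n$ to converge, giving strong $H^1$ convergence and $\ddot u=\lambda_0 K(u)\,i\dot u$ in the limit. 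Since $c(\lambda_0)>0$, the limit $u$ is nonconstant, hence its constant speed is nonzero and $\dot u\ne0$ everywhere; a bootstrap in the equation yields $u\in C^2$. Thus $u$ is a regular, nonconstant closed curve with curvature $\lambda_0 K(u)$ after speed normalization, that is, a $\lambda_0 K$-loop. As this holds at every differentiability point $\lambda_0$, it holds for almost every $\lambda\in\mathbb{R}\setminus\{0\}$, which is the assertion; the one genuinely delicate point throughout is the boundedness of Palais--Smale sequences, secured by the monotonicity trick.
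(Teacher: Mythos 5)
Your overall architecture (a divergence potential $Q$ for $K$, a mountain-pass level $c(\lambda)$, bounded Palais--Smale sequences for a.e.\ $\lambda$ via the monotonicity trick, lattice translations plus weak convergence for compactness) is the same as the paper's, but the functional you start from breaks the argument in two places. With $\mathcal{E}_\lambda(u)=\tfrac12\int|\dot u|^2+\lambda\int Q(u)\cdot i\dot u$ the Euler--Lagrange equation is $\ddot u=\lambda K(u)\,i\dot u$, whose solutions have constant speed $v=|\dot u|$ and geometric curvature $\lambda K(u)/v$: they are $(\lambda/v)K$-loops, and no reparametrization (``speed normalization'') changes the curvature of the image curve, so you get a $\lambda K$-loop only if $v=1$, which the mountain pass does not control. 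Moreover the mountain-pass geometry itself fails for this functional: both the Dirichlet term and the weighted area scale quadratically under $u\mapsto Ru$, so the isoperimetric bound $|G(u)|\le\tfrac{\|K\|_\infty}{4\pi}\bigl(\int|\dot u|\bigr)^2$ yields a positive barrier on small spheres only when $|\lambda|\,\|K\|_\infty<2\pi$, while driving the energy negative on large circles requires $\lambda\bar K>2\pi$ with $|\bar K|\le\|K\|_\infty$ --- the two conditions are incompatible (and small circles centred where $\lambda K>2\pi$ already have negative energy, so $0$ is not separated from the negative set). The paper's choice $L(u)=\bigl(\int_0^1|\dot u|^2\,dt\bigr)^{1/2}$, which is $1$-homogeneous, cures both defects at once: $E_\lambda(u)\ge L(u)\bigl(1-\tfrac{|\lambda|\|K\|_\infty}{4\pi}L(u)\bigr)$ gives a genuine barrier at $L(u)=2\pi/(|\lambda|\|K\|_\infty)$ (with large lattice rectangles and (K2) supplying the negative endpoint), and the equation becomes $\ddot u=\lambda L(u)K(u)\,i\dot u$, whose constant-speed solutions satisfy $L(u)=|\dot u|$ and hence have curvature exactly $\lambda K(u)$.

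The second gap is your claim that $\lambda\mapsto c(\lambda)$ is monotone on a half-line. Here the weighted area $G$ has no sign (it is negative on counterclockwise rectangles and positive on clockwise ones), so $\lambda\mapsto E_\lambda(u)$ is not monotone for fixed $u$ and no normalization making $c(\lambda)$ monotone is apparent; asserting monotonicity to get a.e.\ differentiability is therefore unjustified. The paper avoids this entirely by invoking the Denjoy--Young--Saks theorem, which for an \emph{arbitrary} function guarantees at a.e.\ $\lambda_0$ a sequence $\lambda_n\nearrow\lambda_0$ with $\sup_n\frac{c_{\lambda_n}-c_{\lambda_0}}{\lambda_0-\lambda_n}<\infty$, and this Denjoy property is all the Jeanjean--Toland comparison of almost-optimal paths needs to produce a Palais--Smale sequence with bounded length. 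Your description of that comparison and of the final compactness step otherwise matches the paper, so the proof would go through once you replace the Dirichlet term by $L(u)$ and the monotonicity claim by the Denjoy--Young--Saks argument.
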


The problem of $K$-loops for prescribed periodic curvatures was already considered in \cite{NovVal11}, where for any periodic nonzero function $K$ and for any $\varepsilon>0$ the authors were able to construct a perturbation $K_{\epsilon}$ which is $\varepsilon$-close to $K$ in $L^{1}$, has the same average of $K$ and for which a convex $K_{\varepsilon}$-loop exists. Our result differs substantially, both for the techniques and because we do not modify the curvature function, up to a multiplicative constant in a set of full measure. 

In order to prove Theorem \ref{T:main} we take a variational approach. Let us give more details. As observed in \cite{BetCalGui02}, the $K$-loop problem is equivalent to find 1-periodic, nonconstant solutions $u\colon\mathbb{R}\to\mathbb{R}^{2}$ of 
\begin{equation}\label{eqK}
\ddot{u}=L(u)K(u)i\dot{u}
\end{equation}
where 
\[
L(u):=\left(\int_{0}^{1}|\dot{u}|^{2}\,dt\right)^{\frac12}.
\]
The search for nonconstant 1-periodic solutions of \eqref{eqK} is a variational problem. Indeed, let us consider the Sobolev space 
\begin{equation}\label{eq:H1}
H^{1}:=H^{1}(\mathbb{R}/\mathbb{Z},\mathbb{R}^{2})
\end{equation}
endowed with the Hilbertian norm
\begin{equation}\label{eq:norm}
\|u\|^{2}=\int_{0}^{1}|\dot{u}|^{2}\,dt+\left|\int_{0}^{1}u\,dt\right|^{2}.
\end{equation}
Let $Q\colon\mathbb{R}^{2}\to\mathbb{R}^{2}$ be the vector field defined by
\begin{equation}
\label{Q}
Q(x,y)=\frac12\left(\int_{0}^{x}K(s,y)\,ds\,, \int_{0}^{y}K(x,s)\,ds\right)\quad\forall (x,y)\in\mathbb{R}^{2}
\end{equation}
and let 
\begin{equation}\label{eqG}
E(u):=L(u)+G(u)\quad\text{where}\quad G(u):=\int_{0}^{1}Q(u)\cdot i\dot{u}\,dt~\forall u\in H^{1}\,.
\end{equation}
The energy functional $E$ turns out to be continuous in $H^{1}$ and of class $C^{1}$ in $H^{1}\setminus\mathbb{R}^{2}$ (here we identify $\mathbb{R}^{2}$ with the space of constant functions), and  
$u$ is a nonconstant (classical) 1-periodic solution of \eqref{eqK} if and only if $u\in H^{1}\setminus\mathbb{R}^{2}$ is a critical point of $E$ (see Proposition \ref{P:regularity}). 
The assumptions on $K$ guarantee that  $E$ has a mountain-pass geometry. In particular, the value
\[
c:=\inf_{\gamma\in\Gamma}\max_{s\in[0,1]}E(\gamma(s))\quad\text{where}\quad\Gamma:=\{\gamma\in C([0,1],H^{1})\colon\gamma(0)=0\,,~E(\gamma(1))<0\}
\]
is positive and is an asymptotic critical value. This means that there exists a Palais-Smale sequence (PS sequence, for short) for $E$ at level $c$, that is, a sequence $(u_{n})\subset H^{1}$ such that $E(u_{n})\to c$ and $E'(u_{n})\to 0$ in the dual of $H^{1}$. In fact, the main difficulty is to show that $E$ admits \emph{at least one bounded} PS sequence at a nonzero level. In this regard, we point out that the norm \eqref{eq:norm} is made up of two terms which essentially measure the length and the barycenter of loops. The periodicity of $K$ and a simple translation argument allows us to obtain plainly PS sequences with bounded barycenter. As far as concerns the $L^{2}$-norm of the derivative, we are not able to obtain a bound for a PS sequence of $E$. In order to overcome this difficulty we use a version of the ``monotonicity trick'' introduced by M. Struwe in \cite{Str88An, Str88AM}. More precisely, we consider a family of problems 
\begin{equation}
\label{lambda-eq}
\ddot{u}=\lambda L(u)K(u)i\dot{u}\,,
\end{equation}
where $\lambda$ is a real parameter. For every $\lambda\in\mathbb{R}$ we introduce the energy functional corresponding to \eqref{lambda-eq}, defined by
\begin{equation}\label{energy}
E_{\lambda}(u):=L(u)+\lambda G(u)\quad\forall u\in H^{1}\,,
\end{equation}
with $G$ as in \eqref{eqG}. For every $\lambda\ne 0$ the functional $E_\lambda$ admits a mountain-pass geometry, with mountain-pass level $c_\lambda$. It turns out that whenever the upper left Dini derivative of the mapping $\lambda\mapsto c_{\lambda}$ is finite or $+\infty$ at $\lambda_0$ (in this case, we say that the Denjoy property for $c_\lambda$ holds true at $\lambda_0$), then it is possible to construct a PS sequence $(u_{n})\subset H^{1}$ for $E_{\lambda_0}$ at level $c_{\lambda_0}$ with $\sup_{n}L(u_{n})<\infty$. With this additional information and in view of the periodicity of $K$, after some standard work, one can plainly conclude that $c_{\lambda_0}$ is a critical value and then that \eqref{lambda-eq} with $\lambda=\lambda_0$ admits a nonconstant periodic solution, that is, a $\lambda_0 K$-loop exists. Then, the Denjoy-Young-Saks Theorem \cite{Sak64} ensures that the Denjoy property holds true for almost every $\lambda\in\mathbb{R}\setminus\{0\}$. In this way we prove Theorem \ref{T:main}.

The same strategy can be used to consider also prescribed curvature problems for closed curves for a class of curvature functions which converge to a nonzero constant at infinity. More precisely, we can show:

\begin{Theorem}\label{T:2}
Let $K\colon\mathbb{R}^{2}\to\mathbb{R}$ be a continuous function satisfying:
\begin{itemize}
\item[$(K4)$]
$K(x,y)=K_0+K_1(x,y)$ with $K_0\in\mathbb{R}\setminus\{0\}$ and $K_1(x,y)\to 0$ as $|(x,y)|\to\infty$,
\item[$(K5)$]
there exist $r>0$ and a nonempty open interval $I\subset\mathbb{S}^1$ such that $K_0K_1(z)>0$ for $|z|>r$ and $\mathrm{arg}(z)\in I$.
\end{itemize}
Then for almost every $\lambda\in\mathbb{R}\setminus\{0\}$ there exists a $\lambda K$-loop.
\end{Theorem}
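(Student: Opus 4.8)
The plan is to mirror the proof of Theorem~\ref{T:main} exactly, since the whole point of introducing the parameter $\lambda$ and the monotonicity trick is precisely to make the argument robust enough to transfer to other classes of curvature functions. So I would not reprove the abstract machinery; instead I would verify that each structural ingredient used for Theorem~\ref{T:main} survives under the hypotheses $(K4)$--$(K5)$. The skeleton is: (i) $E_\lambda$ has mountain-pass geometry with level $c_\lambda>0$; (ii) the Denjoy-Young-Saks theorem gives a full-measure set of $\lambda$ at which the Denjoy property for $c_\lambda$ holds; (iii) at such $\lambda_0$ the monotonicity trick produces a PS sequence $(u_n)$ for $E_{\lambda_0}$ at level $c_{\lambda_0}$ with $\sup_n L(u_n)<\infty$; (iv) one extracts a nonconstant critical point, i.e.\ a $\lambda_0 K$-loop.

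The first thing I would check is the mountain-pass geometry. Writing $K=K_0+K_1$, the associated vector field splits as $Q=Q_0+Q_1$, where $Q_0(x,y)=\tfrac{K_0}{2}(x,y)$ is the field for the constant curvature $K_0$ and $Q_1$ is the lower-order contribution from $K_1$. For the constant part, $G_0(u)=\int_0^1 Q_0(u)\cdot i\dot u\,dt = \tfrac{K_0}{2}\int_0^1 u\cdot i\dot u\,dt$ is (twice) the signed area enclosed, which scales like $\|u\|^2$ while $L(u)$ scales like $\|u\|$; this gives the descent direction $E_\lambda(\gamma(1))<0$ for large loops and hence the mountain-pass value $c_\lambda>0$ for $\lambda\ne 0$, exactly as in the constant-curvature (circle) case. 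The hypothesis $(K5)$ is what one would expect to need to keep the $K_1$-contribution from destroying this geometry: since $K_0K_1\ge 0$ on a cone at infinity, blowing up a loop along the direction $\arg(z)\in I$ only reinforces the favourable sign of the area term, so the barrier and the descent are preserved. I would make this precise by exhibiting an explicit path $\gamma$ (e.g.\ large circles, or dilated loops concentrated in the cone) along which $E_\lambda$ becomes negative.

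The steps (ii)--(iii) are formal consequences of the abstract monotonicity-trick lemma already established for Theorem~\ref{T:main}: the Denjoy property at almost every $\lambda$ and the construction of a bounded (in $L$) PS sequence depend only on $c_\lambda$ being a nonincreasing-type monotone function of $\lambda$ together with finiteness of the upper Dini derivative, and not on the specific form of $K$. The only genuinely $K$-dependent step is (iv), the passage from a bounded PS sequence to an actual critical point, and here is where the compactness issue re-enters and where the proof must diverge from the periodic case. In Theorem~\ref{T:main} the loss of compactness along a PS sequence is recovered using the $\mathbb{Z}^2$-periodicity of $K$ and a translation argument to fix the barycenter; that device is unavailable here, so I would instead use the convergence $K_1(x,y)\to 0$ at infinity.

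Consequently I expect the main obstacle to be ruling out vanishing/escape of the bounded PS sequence. Concretely, given $(u_n)$ with $\sup_n L(u_n)<\infty$ and $E_{\lambda_0}(u_n)\to c_{\lambda_0}>0$, $E_{\lambda_0}'(u_n)\to 0$, the bound on $L(u_n)$ controls $\|\dot u_n\|_{L^2}$ but not the barycenter $\int_0^1 u_n\,dt$, so the loops could drift to infinity; if they do, $K_1(u_n)\to 0$ uniformly and $E_{\lambda_0}$ degenerates to the constant-curvature functional $L+\lambda_0 K_0\,(\text{area})$, whose mountain-pass level is attained by circles of radius $|\lambda_0 K_0|^{-1}$. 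I would argue by a concentration-compactness dichotomy: after recentering, $u_n$ converges weakly to some $u$; either $u$ is a nonconstant critical point of $E_{\lambda_0}$ and we are done, or the whole mountain-pass energy escapes to a limit circle. The role of $(K5)$ is to exclude this second alternative by a strict energy inequality, $c_{\lambda_0}<c^{\infty}_{\lambda_0}$, where $c^\infty_{\lambda_0}$ is the mountain-pass level of the limiting constant-curvature problem: because $K_0K_1>0$ on the cone, deforming the limit circle into the region where $\arg(z)\in I$, $|z|>r$ strictly lowers the energy, so the original level lies strictly below the level at infinity and no energy can leak away. Establishing this strict inequality, and thereby the compactness of (a recentering of) the PS sequence, is the heart of the matter; once it is in hand, the weak limit is a nonconstant $\lambda_0 K$-loop and the proof concludes as before.
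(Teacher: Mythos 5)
Your proposal follows essentially the same route as the paper: mountain-pass geometry from the splitting $K=K_0+K_1$, the Denjoy/monotonicity trick yielding a length-bounded PS sequence at a.e.\ $\lambda$, and a dichotomy in which escape of the barycenter forces the level to coincide with that of the limiting constant-curvature problem, excluded by the strict inequality $c_\lambda<\frac{\pi}{|\lambda K_0|}$ coming from $(K5)$. The only minor difference is that the paper obtains the mountain-pass geometry and the upper bound $c_\lambda\le\frac{\pi}{\lambda|K_0|}$ from $(K4)$ alone, by centering large circles far away where $K_1$ is negligible, and uses $(K5)$ exclusively for the strict inequality.
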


In fact, a more general version of Theorem \ref{T:2} holds true, see Theorem \ref{T3}, but under less explicit assumptions on $K$. Theorems \ref{T:2} and \ref{T3} can be compared to some existence results obtained in \cite[Theorem 2.5]{Mus11} and \cite{CorMus23}. The main differences are that we ask no condition on the rate decay at infinity of the ``nonconstant'' part $K_{1}$, neither any other global condition. On the other hand, we cannot handle the exact case when $\lambda=1$ and, taking a sequence of $\lambda_{n}K$-loops $u_{n}$ with $\lambda_{n}\to 1$ and bounded energies, we are not able to control in a uniform way the winding number of $u_{n}$, i.e., the degree of the mappings $\frac{\dot{u}_{n}}{|\dot{u}_{n}|}$. This additional information would be useful to pass to the limit in order to obtain existence of a $K$-loop, but we suspect that further assumptions on $K$ are necessary to this aim.

\section{The mountain pass geometry}
As already observed in the Introduction, for every $\lambda\in\mathbb{R}$ the functional $E_{\lambda}$ defined in \eqref{energy} is well defined and regular enough in the Hilbert space $H^{1}$ introduced in \eqref{eq:H1}. More precisely:

\begin{Proposition}\label{P:regularity}
Assume that $K\colon\mathbb{R}^{2}\to\mathbb{R}$ is a continuous function and let $E_\lambda\colon H^1\to\mathbb{R}$ be defined as in \eqref{energy}. Then $E_\lambda\in C(H^{1})\cap C^1(H^{1}\setminus\mathbb{R}^{2})$ and  
\begin{equation}\label{E'}
E'_\lambda(u)[h]=L(u)^{-1}\int_{0}^{1}\dot u\cdot\dot h\,dt+\lambda\int_{0}^{1}K(u)h\cdot i\dot u\,dt\quad\forall u\in H^{1}\setminus\mathbb{R}^{2},~\forall h\in H^{1}.
\end{equation}
In particular $u$ is a nonconstant (classical) 1-periodic solution of \eqref{lambda-eq} if and only if $u\in H^{1}\setminus\mathbb{R}^{2}$ is a critical point of $E_\lambda$, i.e. $E'_\lambda(u)=0$.
\end{Proposition}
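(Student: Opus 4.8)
The plan is to treat the two summands of $E_\lambda=L+\lambda G$ separately. For the length term, I note that $\Phi(u):=\int_0^1|\dot u|^2\,dt$ is a bounded nonnegative quadratic form on $H^1$, hence of class $C^\infty$ with $\Phi'(u)[h]=2\int_0^1\dot u\cdot\dot h\,dt$, and $\Phi(u)=0$ exactly when $u$ is constant, i.e. on $\mathbb{R}^2$. Writing $L=\sqrt\Phi$ and using that $s\mapsto\sqrt s$ is continuous on $[0,\infty)$ and $C^\infty$ on $(0,\infty)$, I obtain at once that $L\in C(H^1)$ and $L\in C^\infty(H^1\setminus\mathbb{R}^2)$, with $L'(u)[h]=L(u)^{-1}\int_0^1\dot u\cdot\dot h\,dt$. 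This already isolates the single set where regularity is lost (the constants) and produces the first term of \eqref{E'}.

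For $G$ I would first record the Sobolev embedding $H^1(\mathbb{R}/\mathbb{Z},\mathbb{R}^2)\hookrightarrow C(\mathbb{R}/\mathbb{Z},\mathbb{R}^2)$, so that $u_n\to u$ in $H^1$ forces $u_n\to u$ uniformly and $\dot u_n\to\dot u$ in $L^2$; since the images $u_n([0,1])$ then stay in a fixed compact set and both $K$ and $Q$ are continuous, continuity of $G$ on all of $H^1$ is immediate, and likewise the candidate map $u\mapsto[h\mapsto\int_0^1K(u)\,h\cdot i\dot u\,dt]$ is continuous from $H^1$ into $(H^1)^*$. The formal first variation of $G$ is obtained by differentiating under the integral sign and integrating by parts the term $\int_0^1 Q(u)\cdot i\dot h\,dt$ (legitimate for $H^1$ functions on the circle, as $i$ is a constant skew matrix and there are no boundary terms by periodicity); this gives $G'(u)[h]=\int_0^1\big(DQ(u)[h]\cdot i\dot u-DQ(u)[\dot u]\cdot ih\big)\,dt$, and a direct pointwise computation shows that the off-diagonal entries of $DQ$ cancel, leaving only $\partial_xQ_1+\partial_yQ_2=K$, so that the integrand collapses to $K(u)\,h\cdot i\dot u$.

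The subtlety is that under the sole hypothesis that $K$ is continuous the field $Q$ need not be $C^1$ (its off-diagonal partials involve derivatives of $K$), so the computation above is only formal. I expect this to be the main technical point, and I would make it rigorous by approximation: choose $K_n\in C^\infty$ with $K_n\to K$ uniformly on compact sets, define $Q_n$ and $G_n$ accordingly, and carry out the computation rigorously for each smooth $K_n$ to obtain $G_n\in C^1(H^1)$ with $G_n'(u)[h]=\int_0^1K_n(u)\,h\cdot i\dot u\,dt$. On any bounded set $B\subset H^1$ the images stay in a fixed compact set, so $Q_n(u)\to Q(u)$ and $K_n(u)\to K(u)$ uniformly in $u\in B$; this yields $G_n\to G$ uniformly on $B$ and $G_n'\to[h\mapsto\int_0^1K(u)\,h\cdot i\dot u\,dt]$ uniformly on $B$ in the operator norm. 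The standard theorem on uniform limits of $C^1$ maps with uniformly convergent derivatives then gives $G\in C^1(H^1)$ with the claimed derivative, and hence \eqref{E'} together with $E_\lambda\in C(H^1)\cap C^1(H^1\setminus\mathbb{R}^2)$.

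Finally, for the characterization of solutions: if $u\in H^1\setminus\mathbb{R}^2$ is critical, then \eqref{E'} is exactly the weak form of $\ddot u=\lambda L(u)K(u)i\dot u$; since the right-hand side lies in $L^2$ we get $u\in H^2\hookrightarrow C^1$, whence the right-hand side is continuous and a bootstrap gives $u\in C^2$, a classical solution, nonconstant because $u\notin\mathbb{R}^2$. Conversely a nonconstant $C^2$ periodic solution belongs to $H^1\setminus\mathbb{R}^2$, and multiplying \eqref{lambda-eq} by $h\in H^1$, integrating over a period and integrating by parts recovers $E_\lambda'(u)[h]=0$ for all $h$. One may also note that $\tfrac{d}{dt}|\dot u|^2=2\lambda L(u)K(u)\,\dot u\cdot i\dot u=0$, so any such $u$ automatically has constant, nonzero speed, consistent with being a regular parametrization.
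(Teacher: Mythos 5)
Your argument is correct and follows the same route as the paper: a direct computation of $G'$ when $K$ (hence $Q$) is of class $C^1$, followed by an approximation argument for merely continuous $K$ --- precisely the two steps the paper delegates to \cite{CalGui07} and to \cite[Lemma 1.3.3]{CorMus23}. Your explicit observation that the derivative formula only sees $\mathrm{div}(Q)=K$ (the off-diagonal entries of $DQ$ cancel) is exactly what makes the approximation step close, so the proposal fills in the details the paper leaves to the references.
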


\begin{proof}
When $K$ is of class $C^{1}$, also the vector field $Q$ defined in \eqref{Q} is so and satisfies $\mathrm{div}(Q)=K$ on $\mathbb{R}^{2}$. Then, one can argue as in \cite{CalGui07}. When $K$ is only continuous, one follows an approximation argument as in \cite[Lemma 1.3.3]{CorMus23} and concludes. 
\end{proof}

Now, we show that $E_{\lambda}$ admits a mountain pass geometry. More precisely, we prove:

\begin{Proposition}\label{P:mp}
Assume that $K\colon\mathbb{R}^{2}\to\mathbb{R}$ is a continuous function satisfying $(K1)$-$(K2)$. Then, for every $[\alpha,\beta]\subset\mathbb{R}\setminus\{0\}$ there exists $\overline{u}\in H^{1}$ such that 
\begin{equation}\label{baru}
E_{\lambda}(\overline{u})\le 0\quad\text{and}\quad L(\overline{u})>\frac{2\pi}{\left|\lambda\right|\left\|K\right\|_{\infty}}~\forall\lambda\in[\alpha,\beta]
\end{equation}
Moreover, setting $\Gamma:=\{\gamma\in C([0,1],H^{1})\colon\gamma(0)=0\,,~\gamma(1)=\overline{u}\}$ and 
\[
c_{\lambda}:=\inf_{\gamma\in\Gamma}\max_{s\in[0,1]}E_{\lambda}(\gamma(s))\,,
\]
one has that 
\[
c_{\lambda}\ge\frac{\pi}{\left|\lambda\right|\left\|K\right\|_{\infty}}~\forall\lambda\in[\alpha,\beta]\,.
\]
\end{Proposition}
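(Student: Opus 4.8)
The plan is to reduce everything to a single a priori estimate of isoperimetric type,
\[
|G(u)|\le\frac{\|K\|_\infty}{4\pi}\,L(u)^2\qquad\forall u\in H^1,
\]
after which both assertions follow from soft arguments. To prove this estimate I would use that the field $Q$ in \eqref{Q} satisfies $\mathrm{div}\,Q=K$, so that for a smooth loop $u$ Green's formula rewrites $G(u)=\int_0^1 Q(u)\cdot i\dot u\,dt$ as a weighted signed area, $G(u)=-\int_{\mathbb{R}^2}K(p)\,\mathrm{ind}(u,p)\,dp$, where $\mathrm{ind}(u,p)$ is the winding number of $u$ about $p$. Estimating $|K|$ by $\|K\|_\infty$, using $|\mathrm{ind}|\le\mathrm{ind}^2$ (as $\mathrm{ind}$ is integer-valued) together with the Banchoff--Pohl isoperimetric inequality $\int_{\mathbb{R}^2}\mathrm{ind}(u,p)^2\,dp\le \ell(u)^2/4\pi$, and finally $\ell(u)=\int_0^1|\dot u|\,dt\le L(u)$ by Cauchy--Schwarz, I obtain the displayed bound for smooth $u$; since smooth loops are dense in $H^1$ and both $G$ and $L$ are continuous (Proposition \ref{P:regularity}), it extends to all of $H^1$. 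I expect this estimate, with its sharp constant, to be the main obstacle: the delicate points are the passage to non-simple and merely $H^1$ curves and securing the correct isoperimetric constant (the bound is saturated by circles with constant $K$, which is exactly why the claimed value $\frac{\pi}{|\lambda|\|K\|_\infty}$ appears).

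Granting the estimate, the lower bound on $c_\lambda$ is then immediate. Indeed
\[
E_\lambda(u)=L(u)+\lambda G(u)\ge L(u)-\frac{|\lambda|\,\|K\|_\infty}{4\pi}\,L(u)^2=:f_\lambda(L(u)),
\]
and the concave parabola $f_\lambda(r)=r-\frac{|\lambda|\|K\|_\infty}{4\pi}r^2$ attains its maximum $\frac{\pi}{|\lambda|\|K\|_\infty}$ at $r^\ast=\frac{2\pi}{|\lambda|\|K\|_\infty}$. For any $\gamma\in\Gamma$ the map $s\mapsto L(\gamma(s))$ is continuous, equals $0$ at $s=0$ and equals $L(\overline u)>r^\ast$ at $s=1$ (granted by \eqref{baru}); by the intermediate value theorem it equals $r^\ast$ at some $s_0$, whence $\max_s E_\lambda(\gamma(s))\ge E_\lambda(\gamma(s_0))\ge f_\lambda(r^\ast)=\frac{\pi}{|\lambda|\|K\|_\infty}$, and taking the infimum over $\gamma$ gives $c_\lambda\ge\frac{\pi}{|\lambda|\|K\|_\infty}$.

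It remains to construct $\overline u$. I would take circles $u_R(t)=R(\cos 2\pi t,\sin 2\pi t)$ traversed with a suitable orientation. Green's formula gives $G(u_R)=\mp\int_{B_R}K\,dx\,dy$, and since $K$ is periodic with nonzero mean $\overline K=\frac{1}{ab}\int_{[0,a]\times[0,b]}K$ by (K1)--(K2), a tiling/equidistribution argument yields $\int_{B_R}K=\overline K\,\pi R^2+O(R)$ as $R\to\infty$. Because $[\alpha,\beta]\subset\mathbb{R}\setminus\{0\}$ is connected, all $\lambda\in[\alpha,\beta]$ share one sign, so $\mathrm{sign}(\lambda\overline K)$ is constant; I can therefore fix the orientation once and for all so that $\lambda G(u_R)=-|\lambda|\,|\overline K|\,\pi R^2+O(R)$ for every such $\lambda$. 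Writing $\mu:=\min_{\lambda\in[\alpha,\beta]}|\lambda|>0$, this gives, uniformly in $\lambda\in[\alpha,\beta]$,
\[
E_\lambda(u_R)\le 2\pi R-\mu\,|\overline K|\,\pi R^2+O(R)\longrightarrow-\infty,
\]
while $L(u_R)=2\pi R$. Choosing $R$ so large that simultaneously $E_\lambda(u_R)\le 0$ and $2\pi R>\frac{2\pi}{\mu\,\|K\|_\infty}\ge\frac{2\pi}{|\lambda|\,\|K\|_\infty}$ for all $\lambda\in[\alpha,\beta]$, and setting $\overline u:=u_R$, establishes \eqref{baru} and completes the proof.
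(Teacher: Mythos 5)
Your proof is correct, and its architecture coincides with the paper's: the weighted isoperimetric inequality $|G(u)|\le\frac{\|K\|_\infty}{4\pi}L(u)^2$ (Theorem \ref{ii}), the intermediate-value/concave-parabola argument for the lower bound (the paper packages the parabola estimate as Lemma \ref{L:min}, but the computation is identical), and an explicit family of large loops with negative energy to produce $\overline u$. You deviate in two spots. For the isoperimetric estimate the paper combines the identity $G(u)=-\int_{\mathbb{R}^2}\mathrm{Ind}_u\,K$ with Rad\'o's inequality $\int_{\mathbb{R}^2}|\mathrm{Ind}_u|\,dx\,dy\le\frac{1}{4\pi}\left(\int_0^1|\dot u|\,dt\right)^2$, while you use the Banchoff--Pohl inequality for $\int\mathrm{Ind}_u^2$ together with $|n|\le n^2$ on integers; the constants agree, and both routes require the same regularization step (the paper proves the estimate for piecewise-linear loops with isolated self-intersections and extends by $H^1$-density, precisely the delicate point you flag). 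For $\overline u$ the paper takes piecewise-linear parametrizations of the rectangles $[0,na]\times[0,nb]$ (Lemma \ref{L:rect}); since these are unions of $n^2$ periodicity cells, $G(u_n^{\pm})=\mp n^2\int_{[0,a]\times[0,b]}K$ holds exactly, with no error term, and (K2) does the rest. Your circles instead require the additional estimate $\int_{B_R}(K-\overline K)=O(R)$, which is true and which you justify correctly by counting the $O(R)$ lattice cells meeting $\partial B_R$: the rectangle construction buys exactness, yours costs one extra (easy) estimate but uses only the value of the mean rather than the lattice geometry. Your uniformity bookkeeping in $\lambda$ over $[\alpha,\beta]$ and the choice of orientation via the constant sign of $\lambda\overline K$ are both handled correctly.
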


Let us start by stating a version of isoperimetric inequality.
\begin{Theorem}[weighted isoperimetric inequality]\label{ii} 
Assume that $K\colon\mathbb{R}^{2}\to\mathbb{R}$ is a continuous, bounded function and let $G\colon H^{1}\to\mathbb{R}$ be the functional defined in \eqref{energy}. Then
\begin{equation}\label{eq:ii}
|G(u)|\le\frac{\|K\|_{\infty}}{4\pi}\left(\int_{0}^{1}|\dot{u}|\,dt\right)^{2}\le \frac{\|K\|_{\infty}}{4\pi}\,L(u)^{2}\quad\forall u\in H^{1}\,.
\end{equation}
\end{Theorem}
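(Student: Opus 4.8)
The plan is to read $G$ as a \emph{weighted signed area} enclosed by the loop $u$ and to reduce the estimate to the classical isoperimetric inequality. The second inequality in \eqref{eq:ii} is immediate from Cauchy--Schwarz on $[0,1]$, since $\int_{0}^{1}|\dot u|\,dt\le\big(\int_{0}^{1}|\dot u|^{2}\,dt\big)^{1/2}=L(u)$; so the whole content is the first inequality. The starting point is the identity $\mathrm{div}(Q)=K$ (valid pointwise when $K\in C^{1}$), which says that $Q$ is a primitive of $K$ in the divergence sense. Writing $\omega:=Q_{2}\,dx-Q_{1}\,dy$ one checks that $d\omega=-K\,dx\wedge dy$ and $G(u)=\int_{u}\omega$, so by the Stokes/Green theorem counted with multiplicity one obtains the key identity $G(u)=-\int_{\mathbb{R}^{2}}K(z)\,\deg(u,z)\,dz$, where $\deg(u,z)$ is the winding number of $u$ about $z$ (defined for $z\notin u([0,1])$, a null set because an $H^{1}$ loop is rectifiable).

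Granting this identity, I would estimate $|G(u)|\le\|K\|_{\infty}\int_{\mathbb{R}^{2}}|\deg(u,z)|\,dz$ and then prove the \emph{isoperimetric inequality with multiplicity}
\[
\int_{\mathbb{R}^{2}}|\deg(u,z)|\,dz\le\frac{1}{4\pi}\Big(\int_{0}^{1}|\dot u|\,dt\Big)^{2}.
\]
Setting $w:=\deg(u,\cdot)$, a compactly supported integer-valued $BV$ function and $\ell:=\int_{0}^{1}|\dot u|\,dt$, the coarea formula gives $\int_{\mathbb{R}^{2}}|w|\,dz=\sum_{k\ge1}|\{|w|\ge k\}|$ together with $\sum_{k\ge1}\mathrm{Per}(\{|w|\ge k\})=\int_{\mathbb{R}^{2}}|\nabla|w||\le\int_{\mathbb{R}^{2}}|\nabla w|\le\ell$, the last bound holding because the length of $u$ dominates the total variation of its winding number. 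Applying the classical isoperimetric inequality to each superlevel set, $|\{|w|\ge k\}|\le\frac{1}{4\pi}\mathrm{Per}(\{|w|\ge k\})^{2}$, and summing,
\[
\int_{\mathbb{R}^{2}}|w|\,dz\le\frac{1}{4\pi}\sum_{k\ge1}\mathrm{Per}(\{|w|\ge k\})^{2}\le\frac{1}{4\pi}\Big(\sum_{k\ge1}\mathrm{Per}(\{|w|\ge k\})\Big)^{2}\le\frac{\ell^{2}}{4\pi}.
\]
Alternatively, since $w$ is integer valued one has $|w|\le w^{2}$ pointwise, and the Banchoff--Pohl inequality $\int_{\mathbb{R}^{2}} w^{2}\,dz\le\ell^{2}/(4\pi)$ yields the same bound directly.

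The main obstacle is purely one of regularity: the identity $G(u)=-\int_{\mathbb{R}^{2}}K\,\deg(u,\cdot)$ and the $BV$ facts above are transparent for smooth immersed loops and $C^{1}$ weights, but here $K$ is merely continuous and $u$ only lies in $H^{1}$, where $u$ may be non-injective and non-immersed. I would therefore first establish the first inequality in \eqref{eq:ii} for a smooth immersed loop $u$ and a $C^{1}$ weight $K$, where $\mathrm{div}(Q)=K$ holds pointwise, the degree formula is the classical Stokes theorem with multiplicity, and the superlevel sets of $w=\deg(u,\cdot)$ have reduced boundary covered by the image of $u$ counted with multiplicity, giving $\int_{\mathbb{R}^{2}}|\nabla w|\le\ell$. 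For fixed such $u$, mollification produces $C^{1}$ weights $K^{\varepsilon}$ with $\|K^{\varepsilon}\|_{\infty}\le\|K\|_{\infty}$ and $K^{\varepsilon}\to K$ uniformly on the compact range of $u$, whence the associated functionals satisfy $G_{K^{\varepsilon}}(u)\to G_{K}(u)$ and the inequality persists for continuous bounded $K$. Finally, for fixed $K$, I would approximate an arbitrary $u\in H^{1}$ strongly by smooth loops $u_{n}$: continuity of $G$ on $H^{1}$ (Proposition \ref{P:regularity}) gives $G(u_{n})\to G(u)$, and strong $H^{1}$ convergence gives $\int_{0}^{1}|\dot u_{n}|\,dt\to\int_{0}^{1}|\dot u|\,dt$, so the inequality passes to the limit. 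The only delicate point throughout is the bound $\int_{\mathbb{R}^{2}}|\nabla w|\le\ell$ relating the winding number to the length; everything else is soft.
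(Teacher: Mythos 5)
Your proposal is correct and follows essentially the same route as the paper: both arguments reduce the estimate to the identity $G(u)=-\int_{\mathbb{R}^{2}}\mathrm{Ind}_{u}(z)K(z)\,dz$ together with the bound $\int_{\mathbb{R}^{2}}|\mathrm{Ind}_{u}(z)|\,dz\le\frac{1}{4\pi}\bigl(\int_{0}^{1}|\dot u|\,dt\bigr)^{2}$, followed by an approximation argument to pass from nice loops to general $u\in H^{1}$ and from $C^{1}$ to merely continuous $K$. The only difference is that the paper cites these two facts (Kirsch--Laurain for the identity, Rad\'o for the index bound), whereas you derive the first from Stokes' theorem and give a self-contained proof of the second via the coarea formula, the bound $|D(\mathrm{Ind}_{u})|(\mathbb{R}^{2})\le\int_{0}^{1}|\dot u|\,dt$, and the classical isoperimetric inequality applied to the superlevel sets of the winding number --- all of which is sound.
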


\begin{proof}
Let $u\colon\mathbb{R}/\mathbb{Z}\to\mathbb{R}^{2}$ be a continuous, 1-periodic function, with piecewise continuous derivative. Let $\mathcal{C}_{u}:=\mathrm{range}(u)$. The order of a point $(x,y):=z\in\mathcal{C}_{u}$ is defined by $\#\{t\in[0,1)\colon u(t)=z\}$. A point $z\in\mathcal{C}_{u}$ is a self-intersection point when its order is larger than 1. Assume that $u$ has isolated self-intersections of finite order. 
Let us denote $\mathrm{Ind}_{u}(x,y)$ the index (or winding number) of the curve $\mathcal{C}_{u}:=\mathrm{range}(u)$ with respect to a point $(x,y)\colon\mathbb{R}^{2}\setminus\mathcal{C}_{u}$. In complex notation 
\[
\mathrm{Ind}_{u}(z)=\frac{1}{2\pi i}\int_{\mathcal{C}_{u}}\frac{d\zeta}{\zeta-z}=-\frac{1}{2\pi}\int_{0}^{1}\frac{(u-z)\cdot i\dot{u}}{|u-z|^{2}}\,dt\,.
\]
It is well known that the map $z\mapsto\mathrm{Ind}_{u}(z)$ takes integer values, is constant on each connected component of $\mathbb{R}^{2}\setminus\mathcal{C}_{u}$, and vanishes on the unbounded component of $\mathbb{R}^{2}\setminus\mathcal{C}_{u}$. Moreover, as proved in \cite{KirLau10},
\begin{equation}\label{KirschLaurain}
G(u)=-\int_{\mathbb{R}^{2}}\mathrm{Ind}_{u}(x,y)K(x,y)\,dx\,dy\,.
\end{equation}
In addition, 
\begin{equation}\label{Rado}
\int_{\mathbb{R}^{2}}|\mathrm{Ind}_{u}(x,y)|\,dx\,dy\le\frac1{4\pi}\left(\int_{0}^{1}|\dot u|\,dt\right)^{2}
\end{equation}
(see \cite{Rad47}). Then \eqref{KirschLaurain}--\eqref{Rado} plainly imply \eqref{eq:ii}.  If $u\in H^{1}$, for every $n\in\mathbb{N}$, one can easily construct a sequence of piecewise linear mappings $u_{n}\colon\mathbb{R}/\mathbb{Z}\to\mathbb{R}^{2}$ which parameterize polygonals with isolated self-intersections of finite order and such that $u_{n}\to u$ in $H^{1}$. Since \eqref{eq:ii} holds true for each $u_{n}$, passing to the limit, one obtains \eqref{eq:ii} for $u$, too. 
\end{proof}

\begin{Lemma}\label{L:min}
If $K\colon\mathbb{R}^2\to\mathbb{R}$ is a continuous, bounded function, then for every $\lambda\in\mathbb{R}$ and for every $u\in H^{1}$ one has that
\[
L(u)\le \frac{2\pi}{|\lambda|\|K\|_{\infty}}\Rightarrow E_{\lambda}(u)\ge\frac{L(u)}{2}\,.
\]
\end{Lemma}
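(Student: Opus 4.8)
The plan is to read off the inequality directly from the weighted isoperimetric estimate of Theorem \ref{ii}. By definition \eqref{energy} we have $E_\lambda(u)=L(u)+\lambda G(u)$, so the only thing standing between us and the conclusion is a lower bound on the (possibly negative) term $\lambda G(u)$. The natural move is to bound it from below by $-|\lambda|\,|G(u)|$ and then invoke \eqref{eq:ii}, which gives $|G(u)|\le\frac{\|K\|_\infty}{4\pi}L(u)^2$. This is the one substantive input, and it is already available to us as a stated result.

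Carrying this out, I would first write
\[
E_\lambda(u)\ge L(u)-|\lambda|\,|G(u)|\ge L(u)-|\lambda|\,\frac{\|K\|_\infty}{4\pi}\,L(u)^2,
\]
a chain valid for every $u\in H^1$ and every $\lambda\in\mathbb{R}$. The right-hand side is a concave downward quadratic in $L(u)$, and the task reduces to a one-variable algebraic check: I want to verify that it dominates $\tfrac12 L(u)$. Factoring out $L(u)\ge 0$, the desired inequality $E_\lambda(u)\ge\tfrac12 L(u)$ is equivalent to
\[
L(u)\Bigl(\tfrac12-|\lambda|\,\tfrac{\|K\|_\infty}{4\pi}\,L(u)\Bigr)\ge 0,
\]
and since $L(u)\ge 0$ this holds precisely when $\tfrac12\ge |\lambda|\,\tfrac{\|K\|_\infty}{4\pi}L(u)$, i.e.\ exactly under the hypothesis $L(u)\le\frac{2\pi}{|\lambda|\,\|K\|_\infty}$. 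So the threshold in the statement is not arbitrary: it is the largest value of $L(u)$ for which the quadratic correction has not yet eaten half of the linear term.

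There is essentially no obstacle here; the only care needed is with the degenerate cases, which I would dispose of separately. If $\lambda=0$ (or $K\equiv 0$, so $\|K\|_\infty=0$), the right-hand side $\frac{2\pi}{|\lambda|\,\|K\|_\infty}$ is read as $+\infty$, the hypothesis is automatic, and $E_\lambda(u)=L(u)\ge\tfrac12 L(u)$ trivially since $G(u)=0$ or $\lambda=0$. When $u$ is constant, $L(u)=0$ and both sides vanish. In all remaining cases the computation above applies verbatim, so the lemma follows.
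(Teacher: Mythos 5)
Your argument is correct and is essentially identical to the paper's proof: both bound $\lambda G(u)\ge -|\lambda|\,|G(u)|$, apply the weighted isoperimetric inequality \eqref{eq:ii}, and factor out $L(u)$ to reduce to the threshold condition. Your explicit treatment of the degenerate cases $\lambda=0$ and $\|K\|_\infty=0$ is a minor (harmless) addition the paper omits.
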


\begin{proof}
Fix $\lambda\in\mathbb{R}\setminus\{0\}$. In view of the isoperimetric inequality \eqref{eq:ii}, for every $u\in H^{1}$ one has that
\[
E_{\lambda}(u)\ge L(u)-\left|\lambda\right|\left|G(u)\right|\ge L(u)\left(1-\frac{\left|\lambda\right|\|K\|_{\infty}}{4\pi}\,L(u)\right)
\]
and the conclusion follows. 
\end{proof}

\begin{Lemma}\label{L:rect}
If $K\colon\mathbb{R}^2\to\mathbb{R}$ is a continuous function satisfying $(K1)$, then for every $n\in\mathbb{N}$ there exist piecewise linear parametrizations $u_{n}^{+},u_{n}^{-}\in H^{1}$ of the boundary of the rectangle $[0,na]\times[0,nb]$, with counter-clockwise, or clockwise orientation, respectively, such that
\begin{equation}\label{eq:rect}
L(u_{n}^{+})=2n(a+b)\quad\text{and}\quad E_{\lambda}(u^{\pm}_{n})=2n(a+b)\mp n^{2}\lambda\int_{[0,a]\times[0,b]}K(x,y)\,dx\,dy\,.
\end{equation}
\end{Lemma}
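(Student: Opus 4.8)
The plan is to build explicit constant-speed piecewise linear loops tracing $\partial R_n$, where $R_n:=[0,na]\times[0,nb]$, and then to read off the two quantities in \eqref{eq:rect} from elementary geometry together with the winding-number identity \eqref{KirschLaurain} already established inside the proof of Theorem \ref{ii}.

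First I would parametrize $\partial R_n$ on $[0,1]$ \emph{at constant speed}, traversing the four edges in counter-clockwise order and allotting to each edge a subinterval of $[0,1]$ whose length is proportional to the length of that edge; on each such subinterval $u_n^{+}$ is the affine map onto the corresponding edge. The resulting map is Lipschitz, hence lies in $H^{1}$, is piecewise linear, and satisfies $|\dot u_n^{+}|\equiv 2n(a+b)$ (the perimeter) almost everywhere. Therefore $\int_{0}^{1}|\dot u_n^{+}|^{2}\,dt=\big(2n(a+b)\big)^{2}$, so that $L(u_n^{+})=2n(a+b)$. Taking $u_n^{-}(t):=u_n^{+}(1-t)$ reverses the orientation to clockwise while leaving $|\dot u_n^{-}|$ unchanged, so $L(u_n^{-})=2n(a+b)$ as well; note that the constant-speed choice (time proportional to edge length) is what makes $L$ equal the perimeter rather than some other functional of the reparametrization.

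Next I would compute $G(u_n^{\pm})$ via \eqref{KirschLaurain}. Since $u_n^{\pm}$ is a simple closed piecewise linear curve with no self-intersections, that formula applies and gives
\[
G(u_n^{\pm})=-\int_{\mathbb{R}^{2}}\mathrm{Ind}_{u_n^{\pm}}(x,y)\,K(x,y)\,dx\,dy.
\]
The winding number equals $+1$ at every interior point of $R_n$ for the counter-clockwise loop $u_n^{+}$ (and $-1$ for the clockwise loop $u_n^{-}$) and vanishes outside $\overline{R_n}$, whence $G(u_n^{\pm})=\mp\int_{R_n}K(x,y)\,dx\,dy$. Invoking the double periodicity $(K1)$, the rectangle $R_n$ is tiled by $n^{2}$ translates of the fundamental cell $[0,a]\times[0,b]$, on each of which $K$ has the same integral, so $\int_{R_n}K=n^{2}\int_{[0,a]\times[0,b]}K$. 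Substituting into $E_{\lambda}=L+\lambda G$ (see \eqref{energy}) yields exactly \eqref{eq:rect}.

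The argument is essentially routine; the only points requiring a little care are verifying that the constant-speed piecewise linear map indeed belongs to $H^{1}$ with the claimed a.e. speed, and fixing the sign of the winding number for each orientation, which determines the $\mp$ in \eqref{eq:rect}. I do not anticipate a genuine obstacle beyond this bookkeeping.
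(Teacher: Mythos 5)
Your proposal is correct and follows essentially the same route as the paper: an explicit constant-speed piecewise linear parametrization of $\partial([0,na]\times[0,nb])$ so that $L$ equals the perimeter $2n(a+b)$, the value of $G$ obtained from the winding-number identity \eqref{KirschLaurain} together with the tiling of the rectangle by $n^{2}$ periodicity cells, and orientation reversal to produce $u_{n}^{-}$. The paper merely writes the parametrization out edge by edge and states the value of $G(u_n^{+})$ without comment, so your computation via \eqref{KirschLaurain} supplies exactly the intended bookkeeping.
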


\begin{proof} Let us denote $\mathbf{e}_{i}$ the $i$-th vector of the canonical basis of $\mathbb{R}^{2}$, $i=1,2$, let
\[
u_{n}^{+}(t):=\begin{cases}2(a+b)nt\mathbf{e}_{1}&\text{as $0\le t\le\frac a{2(a+b)}$}\\na(\mathbf{e}_{1}-\mathbf{e}_{2})+2(a+b)nt\mathbf{e}_{2}&\text{as $\frac a{2(a+b)}<t\le\frac12$}\\(2a+b)n\mathbf{e}_{1}+bn\mathbf{e}_{2}-2(a+b)nt\mathbf{e}_{1}&\text{as $\frac12<t\le\frac12+\frac{a}{2(a+b)}$}\\2(a+b)(1-t)n\textbf{e}_{2}&\text{as $\frac12+\frac{a}{2(a+b)}<t\le 1$}
\end{cases}
\]
and let us extend $u_{n}^{+}$ on $\mathbb{R}$ in a periodic way, with period 1. One easily checks that $u_{n}^{+}\in H^{1}$, $u_{n}^{+}$ is a parametrization of the boundary of the rectangle $[0,na]\times[0,nb]$, with counter-clockwise orientation, 
\[
L(u_{n}^{+})=2n(a+b)\quad\text{and}\quad G(u_{n}^{+})=-n^{2}\int_{[0,a]\times[0,b]}K(z)\,dz\,.
\] 
Hence \eqref{eq:rect} is proved for $u_n^+$. Finally, the mapping $u_{n}^{-}(t):=u_{n}^{+}(-t)$ is in $H^{1}$, is a parametrization of the boundary of the same rectangle, with clockwise orientation, $L(u_{n}^{-})=L(u_{n}^{+})$, and $G(u_{n}^{-})=-G(u_{n}^{+})$. Hence  \eqref{eq:rect} holds also for $u_n^+$. 
\end{proof}

\noindent
\emph{Proof of Proposition \ref{P:mp}.} Fix $\alpha,\beta>0$ with $\alpha<\beta$. We use the assumption (K2). Assume, in particular, that $\int_{[0,a]\times[0,b]}K(x,y)\,dx\,dy>0$. By Lemma \ref{L:rect}, there exists $\overline{n}\in\mathbb{N}$ such that $L(u_{\overline{n}}^{+})>\frac{2\pi}{\left|\lambda\right|\|K\|_{\infty}}$ and $E_{\lambda}(u_{\overline{n}}^{+})\le0$ for every $\lambda\in[\alpha,\beta]$. Set $\overline{u}:=u_{\overline{n}}^{+}$. Then for every $\gamma\in C([0,1],H^{1})$ such that $\gamma(0)=0$ and $\gamma(1)=\overline{u}$, by continuity, there exists $\overline{s}\in(0,1)$ such that $L(\gamma(\overline{s}))=\frac{2\pi}{|\lambda|\|K\|_{\infty}}$. Thence, by Lemma \ref{L:min}
\[
\max_{s\in[0,1]}E_{\lambda}(\gamma(s))\ge E_{\lambda}(\gamma(\overline{s}))\ge \frac{\pi}{|\lambda|\|K\|_{\infty}}
\]
and the conclusion plainly follows. In case $\int_{[0,a]\times[0,b]}K(x,y)\,dx\,dy<0$ one takes $\overline{u}:=u_{\overline{n}}^{-}$. If $\alpha<\beta<0$, one argues in a similar way.\hfill$\square$

\begin{Remark}
The assumption $(K2)$ is used just to guarantee the existence of some $\overline{u}\in H^1$ satisfying \eqref{baru}. Actually \eqref{baru} might hold true also in other cases, even if the average of $K$ on the periodicity rectangle is null. Suppose for instance that 
\begin{itemize}
\item[$(K3)$] $K(x_0,y_0)\ne 0$ at some $(x_0,y_0)\in\mathbb{R}^2$.
\end{itemize}
In particular, if  $K(x_0,y_0)> 0$, then for $r>0$ small enough, $K(x,y)\ge\frac{K(x_0,y_0)}{2}>0$ in a disc $D_r(x_0,y_0)$. Let $\overline{u}(t)=u_0+re^{2\pi it}$. Then $L(\overline{u})=2\pi r$ and $E_\lambda(\overline{u})=2\pi r-\lambda \pi r^2 \frac{K(x_0,y_0)}{2}$. Therefore \eqref{baru} holds true for $\lambda>0$ sufficiently large. Many variations of this argument can be developed. 
\end{Remark}

\section{Palais-Smale sequences with bounded length}
Let us fix an interval $[\alpha,\beta]\subset\mathbb{R}\setminus\{0\}$ and for every $\lambda\in[\alpha,\beta]$ let $c_{\lambda}$ be the mountain pass level of $E_{\lambda}$ (see Proposition \ref{P:mp}). In this section we assume that the mapping $\lambda\mapsto c_{\lambda}$ satisfies the Denjoy property at some $\lambda_{0}\in(\alpha,\beta)$, that is: 
\begin{itemize}
\item[$(D)$] there exists a sequence $(\lambda_{n})_{n\in\mathbb{N}}\subset(\alpha,\beta)$ such that
\[
\lambda_{n}<\lambda_{n+1}~~\forall n\in\mathbb{N}\,,\quad
\lambda_{n}\to\lambda_{0}\text{ as }n\to\infty\,,\quad
\sup_{n\in\mathbb{N}}\frac{c_{\lambda_{n}}-c_{\lambda_{0}}}{\lambda_{0}-\lambda_{n}}<\infty\,.
\]
\end{itemize}
Our goal is to show:
\begin{Proposition}\label{P:PS-bound}
Let $\lambda_{0}\in(\alpha,\beta)$ be such that $(D)$ holds. Then the functional $E_{\lambda_{0}}$ admits a PS sequence $(u_{n})_{n}\subset H^{1}$ at level $c_{\lambda_{0}}$ such that $\sup_{n}L(u_{n})<\infty$. 
\end{Proposition}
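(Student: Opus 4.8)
The plan is to implement Struwe's monotonicity trick. The key structural fact I would exploit is that the functionals $E_\lambda = L + \lambda G$ depend affinely on $\lambda$ through a \emph{fixed} term $G$, while the mountain-pass geometry is uniform on $[\alpha,\beta]$ by Proposition \ref{P:mp} (the same path endpoint $\overline{u}$ works for all $\lambda$ in the interval). I would first record the elementary monotonicity and Lipschitz-type comparisons for $\lambda \mapsto c_\lambda$. Since $G$ is fixed, for any fixed $u$ the map $\lambda \mapsto E_\lambda(u)$ is affine; taking the max over a path and then the inf over $\Gamma$ shows $\lambda \mapsto c_\lambda$ is (say) nondecreasing or at least has controlled difference quotients, and in particular is continuous where the Denjoy condition $(D)$ holds. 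The crucial quantitative consequence of $(D)$ is that the incremental quotients $\frac{c_{\lambda_n}-c_{\lambda_0}}{\lambda_0-\lambda_n}$ are bounded above by some constant $M$; this is exactly the bound that will translate into a length bound.

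\medskip

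Next I would extract, for each $n$, an almost-optimal path $\gamma_n \in \Gamma$ for the level $c_{\lambda_n}$, i.e.\ $\max_{s} E_{\lambda_n}(\gamma_n(s)) \le c_{\lambda_n} + (\lambda_0 - \lambda_n)$. The heart of the argument is to show that for a point $s$ at which $E_{\lambda_0}(\gamma_n(s))$ is near its maximum along $\gamma_n$ (hence near $c_{\lambda_0}$), the length $L(\gamma_n(s))$ is uniformly bounded. I would estimate as follows: along $\gamma_n$, evaluate the \emph{fixed} geometric quantity $G(\gamma_n(s))$. Using $E_{\lambda_n} = E_{\lambda_0} + (\lambda_n - \lambda_0) G$, one writes
\begin{equation}\label{eq:plan-split}
E_{\lambda_0}(\gamma_n(s)) = E_{\lambda_n}(\gamma_n(s)) - (\lambda_n - \lambda_0) G(\gamma_n(s)).
\end{equation}
At a point where $E_{\lambda_0}(\gamma_n(s))$ is close to $c_{\lambda_0}$ (which is $\ge c_{\lambda_n} - C(\lambda_0 - \lambda_n)$ by monotonicity and $(D)$), combining with the almost-optimality of $\gamma_n$ for $E_{\lambda_n}$ forces $(\lambda_0 - \lambda_n)|G(\gamma_n(s))|$ to be bounded by a constant times $(\lambda_0 - \lambda_n)$, hence $|G(\gamma_n(s))| \le M'$ for a constant independent of $n$. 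Since $E_{\lambda_0}$ is also near $c_{\lambda_0}$, the identity $L = E_{\lambda_0} - \lambda_0 G$ then yields $L(\gamma_n(s)) \le c_{\lambda_0} + o(1) + |\lambda_0| M'$, a uniform length bound.

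\medskip

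Having located, for each $n$, a point $v_n := \gamma_n(s_n)$ with $E_{\lambda_0}(v_n) \to c_{\lambda_0}$ and $\sup_n L(v_n) < \infty$, I would invoke a quantitative deformation / Ekeland-type argument to upgrade these to a genuine Palais--Smale sequence. Concretely, on the sublevel-restricted region $\{L \le R\}$ where $R$ is the uniform length bound, $E_{\lambda_0}$ is of class $C^1$ (by Proposition \ref{P:regularity}, since on that region we stay bounded away from the constants once $L$ is also bounded below by the mountain-pass estimate of Proposition \ref{P:mp}), and one runs the standard argument: if no PS sequence with bounded length existed at level $c_{\lambda_0}$, a deformation lemma localized to $\{L \le R\}$ would push the almost-optimal paths $\gamma_n$ below $c_{\lambda_0}$ without leaving the region, contradicting the definition of $c_{\lambda_0}$ as the mountain-pass level. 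This produces $(u_n)$ with $E_{\lambda_0}(u_n) \to c_{\lambda_0}$, $E'_{\lambda_0}(u_n) \to 0$, and $\sup_n L(u_n) < \infty$.

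\medskip

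The main obstacle I anticipate is the deformation step, specifically ensuring that the pseudo-gradient flow stays inside the region of bounded length and away from the constant functions $\mathbb{R}^2$ (where $E_{\lambda_0}$ fails to be $C^1$). The lower bound $L \ge \frac{2\pi}{|\lambda_0|\|K\|_\infty}$ along the critical level from Proposition \ref{P:mp} is what keeps us off $\mathbb{R}^2$, and the affine structure plus the $(D)$-bound is what confines the flow to $\{L \le R\}$; making these two confinements compatible with a single deformation is the delicate point. A secondary technical care is that $E_{\lambda_0}$ need not satisfy a global Palais--Smale condition, so the deformation must be carried out at the level of the minimax family rather than by assuming compactness.
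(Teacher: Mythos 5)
Your proposal is correct and follows essentially the same route as the paper: take almost-optimal paths for $E_{\lambda_n}$, use the affine dependence on $\lambda$ together with the Denjoy bound to force $G$ (hence $L$) to be bounded at the near-maximal points of those paths, and conclude with a deformation argument at the minimax level (this is the Jeanjean--Toland form of the monotonicity trick that the paper implements in Lemmas \ref{L:bound} and \ref{L:Meps}). The one refinement worth noting is that the delicate confinement you anticipate is not actually needed: the deformation is not required to preserve $\{L\le R\}$; it suffices to assume, for contradiction, that $\|E'_{\lambda_0}\|\ge\varepsilon_0$ on the set where the energy is within $\varepsilon_0$ of $c_{\lambda_0}$ \emph{and} $L\le C_0+1$, because every near-maximal point of $\gamma_n$ lies in that set by your length bound, while the remaining points of the path only need the energy-nonincreasing property of the deformation.
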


The proof of Proposition \ref{P:PS-bound} uses the monotonicity trick conceived by M. Struwe in \cite{Str88An, Str88AM}. This method, in its original form, exploits the almost everywhere differentiability of monotone functions (hence the name) and allows us to obtain some bound on the PS sequences. It has found several applications for different variational problems (see \cite{Str22}) and many versions and refinements have been developed. Here we follow essentially a version due to Jeanjean and Toland \cite{JeaTol98}, which works when one has no information on monotonicity or almost everywhere differentiability of the function $\lambda\mapsto c_{\lambda}$. Actually, as pointed out in \cite{JeaTol98}, the Denjoy property (D) is enough. 

\begin{Lemma}\label{L:bound}
Let $\lambda_{0}\in(\alpha,\beta)$ and let $(\lambda_{n})_{n\in\mathbb{N}}$ be a sequence satisfying $(D)$. Then there exists $(\gamma_{n})_{n\in\mathbb{N}}\subset\Gamma$ and a constant $C_{0}>0$ (depending on $\lambda_{0}$) such that:
\begin{itemize}
\item[\emph{(i)}] if $E_{\lambda_{0}}(\gamma_{n}(s))\ge c_{\lambda_{0}}-(\lambda_{0}-\lambda_{n})$ then $L(\gamma_{n}(s))\le C_{0}$,
\item[\emph{(ii)}] $\max_{s\in[0,1]}E_{\lambda_{0}}(\gamma_{n}(s))\to c_{\lambda_{0}}$ as $n\to\infty$.
\end{itemize}
\end{Lemma}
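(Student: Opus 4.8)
The plan is to exploit the monotonicity trick in the Jeanjean--Toland form, whose essential idea is that the Denjoy property $(D)$ provides a sequence $\lambda_n\uparrow\lambda_0$ along which the difference quotients $\frac{c_{\lambda_n}-c_{\lambda_0}}{\lambda_0-\lambda_n}$ stay bounded, say by a constant $M$. The key structural fact to invoke is that $E_\lambda(u)=L(u)+\lambda G(u)$ is \emph{affine} in $\lambda$ for each fixed $u$, so that $E_{\lambda_n}(u)-E_{\lambda_0}(u)=(\lambda_n-\lambda_0)G(u)$, and hence $L(u)$ can be recovered from the two energies via $L(u)=\frac{\lambda_0 E_{\lambda_n}(u)-\lambda_n E_{\lambda_0}(u)}{\lambda_0-\lambda_n}$. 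This linear-algebra identity is what converts a bound on energies into a bound on length, and it is the heart of the whole scheme.

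First I would, for each $n$, choose a nearly optimal path $\gamma_n\in\Gamma$ for the larger parameter $\lambda_n$, i.e. one satisfying
\[
\max_{s\in[0,1]}E_{\lambda_n}(\gamma_n(s))\le c_{\lambda_n}+(\lambda_0-\lambda_n).
\]
Such $\gamma_n$ exist by the definition of $c_{\lambda_n}$ as an infimum, and $\lambda_0-\lambda_n>0$ serves as the small tolerance. The point of testing the path against $E_{\lambda_n}$ rather than $E_{\lambda_0}$ is that $\lambda_n$ is the parameter whose mountain-pass value appears in the difference quotient. Next I would analyze any $s$ for which the hypothesis of (i) holds, namely $E_{\lambda_0}(\gamma_n(s))\ge c_{\lambda_0}-(\lambda_0-\lambda_n)$; writing $u=\gamma_n(s)$ and combining this lower bound on $E_{\lambda_0}(u)$ with the upper bound $E_{\lambda_n}(u)\le c_{\lambda_n}+(\lambda_0-\lambda_n)$ just obtained, I substitute into the length identity above to get
\[
L(u)\le\frac{\lambda_0\big(c_{\lambda_n}+(\lambda_0-\lambda_n)\big)-\lambda_n\big(c_{\lambda_0}-(\lambda_0-\lambda_n)\big)}{\lambda_0-\lambda_n}
=\lambda_0\frac{c_{\lambda_n}-c_{\lambda_0}}{\lambda_0-\lambda_n}+c_{\lambda_0}+\lambda_0+\lambda_n(\lambda_0-\lambda_n)/(\lambda_0-\lambda_n),
\]
and after simplification the dangerous factor $(\lambda_0-\lambda_n)^{-1}$ multiplies only the difference quotient $\frac{c_{\lambda_n}-c_{\lambda_0}}{\lambda_0-\lambda_n}$, which is bounded by $M$ via $(D)$, plus bounded remainder terms. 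Hence $L(u)$ is bounded by a constant $C_0$ depending only on $\lambda_0$, $M$, $c_{\lambda_0}$ and $\beta$, which proves (i). (One must keep careful track of signs since $\lambda_0$ could be negative; the stated interval $[\alpha,\beta]$ avoids zero, so $\lambda_0$ is bounded away from $0$ and all coefficients remain controlled.)

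For (ii), I would estimate $\max_s E_{\lambda_0}(\gamma_n(s))$ from above. Again using affineness, $E_{\lambda_0}(u)=E_{\lambda_n}(u)+(\lambda_0-\lambda_n)G(u)$, so on the part of the path where $L(\gamma_n(s))\le C_0$ the term $(\lambda_0-\lambda_n)G(u)$ is controlled by the isoperimetric inequality \eqref{eq:ii} (giving $|G(u)|\le\frac{\|K\|_\infty}{4\pi}C_0^2$) times the vanishing factor $\lambda_0-\lambda_n$, hence is $o(1)$; combined with $E_{\lambda_n}(\gamma_n(s))\le c_{\lambda_n}+(\lambda_0-\lambda_n)$ and the continuity of $\lambda\mapsto c_\lambda$ at $\lambda_0$ (which follows, e.g., from the explicit two-sided bounds in Proposition \ref{P:mp} together with convexity/monotonicity considerations), this yields $\limsup_n\max_s E_{\lambda_0}(\gamma_n(s))\le c_{\lambda_0}$. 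The matching lower bound is immediate since $\gamma_n\in\Gamma$ forces $\max_s E_{\lambda_0}(\gamma_n(s))\ge c_{\lambda_0}$. The main obstacle, and the step I would write most carefully, is handling those $s$ where the length hypothesis of (i) \emph{fails} while still controlling $E_{\lambda_0}(\gamma_n(s))$ from above: on such $s$ one has $E_{\lambda_0}(\gamma_n(s))<c_{\lambda_0}-(\lambda_0-\lambda_n)<c_{\lambda_0}$ directly by the negation of the hypothesis, so these points never threaten the upper estimate in (ii) — this case-split between ``high-energy hence short'' and ``long hence low-energy'' is precisely what makes the argument close, and making the bookkeeping of the two regimes airtight is where the care is needed.
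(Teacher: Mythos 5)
Your argument follows essentially the same route as the paper's proof: you pick near-optimal paths $\gamma_n$ for $E_{\lambda_n}$ with tolerance $\lambda_0-\lambda_n$, exploit the affine dependence of $E_\lambda(u)$ on $\lambda$ to convert the upper bound on $E_{\lambda_n}(\gamma_n(s))$ together with the lower bound on $E_{\lambda_0}(\gamma_n(s))$ into a bound on $L(\gamma_n(s))$, and then use the isoperimetric inequality \eqref{eq:ii} on the short part of the path to prove (ii). Your identity $L(u)=\frac{\lambda_0E_{\lambda_n}(u)-\lambda_nE_{\lambda_0}(u)}{\lambda_0-\lambda_n}$ is just the paper's two steps (first $-G(u)\le C_1$, then $L(u)=E_{\lambda_n}(u)-\lambda_nG(u)$) composed into one, and your case split in (ii) between ``high energy hence short'' and ``long hence already below $c_{\lambda_0}-(\lambda_0-\lambda_n)$'' is exactly the paper's mechanism; the paper simply evaluates at the maximizer $s_n$, which automatically lies in the first regime because $E_{\lambda_0}(\gamma_n(s_n))\ge c_{\lambda_0}$.

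The one step that does not survive as written is the appeal to ``the continuity of $\lambda\mapsto c_\lambda$ at $\lambda_0$, which follows from the explicit two-sided bounds in Proposition \ref{P:mp} together with convexity/monotonicity considerations.'' Proposition \ref{P:mp} gives only a lower bound on $c_\lambda$, and no convexity or monotonicity of $\lambda\mapsto c_\lambda$ is established or available here --- avoiding any such information is precisely the point of the Jeanjean--Toland/Denjoy formulation being used. Fortunately you do not need continuity: hypothesis $(D)$ itself gives $c_{\lambda_n}-c_{\lambda_0}\le M(\lambda_0-\lambda_n)$ with $M:=\sup_n\frac{c_{\lambda_n}-c_{\lambda_0}}{\lambda_0-\lambda_n}<\infty$, hence $c_{\lambda_n}\le c_{\lambda_0}+o(1)$ along the sequence, which is all your upper estimate in (ii) requires; replacing the continuity claim by this one line closes the proof. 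Two smaller points: your derivation of (i) from the length identity needs $\lambda_0,\lambda_n>0$ to preserve the inequalities upon multiplication (for $[\alpha,\beta]\subset(-\infty,0)$ one reduces to this case by replacing $K$ with $-K$, as the paper does in Proposition \ref{P:mp}), and since $M$ may be negative you should take $C_0$ to be, say, the maximum of $1$ and the bound you obtain.
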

\begin{proof}
For every $n\in\mathbb{N}$ there exists $\gamma_{n}\in\Gamma$ such that 
\begin{equation}\label{gamman}
\max_{s\in[0,1]}E_{\lambda_{n}}(\gamma_{n}(s))\le c_{\lambda_{n}}+\lambda_{0}-\lambda_{n}\,.
\end{equation}
If $E_{\lambda_{0}}(\gamma_{n}(s))\ge c_{\lambda_{0}}-(\lambda_{0}-\lambda_{n})$, then
\[\begin{split}
-G(\gamma_{n}(s))&=\frac{E_{\lambda_{n}}(\gamma_{n}(s))-E_{\lambda_{0}}(\gamma_{n}(s))}{\lambda_{0}-\lambda_{n}}\\
&\le\frac{c_{\lambda_{n}}+(\lambda_{0}-\lambda_{n})-c_{\lambda_{0}}+(\lambda_{0}-\lambda_{n})}{\lambda_{0}-\lambda_{n}}\le\sup_{n\in\mathbb{N}}\frac{c_{\lambda_{n}}-c_{\lambda_{0}}}{\lambda_{0}-\lambda_{n}}+2:=C_{1}
\end{split}\]
thence
\[
L(\gamma_{n}(s))=E_{\lambda_{n}}(\gamma_{n}(s))-\lambda_{n} G(\gamma_{n}(s))\le c_{\lambda_{n}}+\lambda_{0}-\lambda_{n}+\lambda_{n} C_{1}\le C_{0}
\]
Thus (i) is proved. Let us discuss (ii). For every $n\in\mathbb{N}$ there exists $s_{n}\in[0,1]$ such that
\[
E_{\lambda_{0}}(\gamma_{n}(s_{n}))=\max_{s\in[0,1]}E_{\lambda_{0}}(\gamma_{n}(s))\,.
\]
Setting $u_{n}=\gamma_{n}(s_{n})$, by \eqref{eq:ii} and \eqref{gamman} we estimate
\begin{align}
\nonumber
E_{\lambda_{0}}(u_{n})&=E_{\lambda_{n}}(u_{n})+(\lambda_{0}-\lambda_{n})G(u_{n})\le\max_{s\in[0,1]}E_{\lambda_{n}}(\gamma_{n}(s))+(\lambda_{0}-\lambda_{n})\frac{\left\|K\right\|_{\infty}}{4\pi}L(u_{n})^{2}\\
\nonumber
&\le c_{\lambda_{n}}+(\lambda_{0}-\lambda_{n})\left[1+\frac{\left\|K\right\|_{\infty}}{4\pi}L(u_{n})^{2}\right]\\ 
\label{leq}
&=c_{\lambda_{0}}+(\lambda_{0}-\lambda_{n})\left[1+\frac{c_{\lambda_{n}}-c_{\lambda_{0}}}{\lambda_{0}-\lambda_{n}}+\frac{\left\|K\right\|_{\infty}}{4\pi}L(u_{n})^{2}\right]\,.
\end{align}
Since 
\begin{equation}\label{geq}
E_{\lambda_{0}}(u_{n})=\max_{s\in[0,1]}E_{\lambda_{0}}(\gamma_{n}(s))\ge c_{\lambda_{0}}>c_{\lambda_{0}}-(\lambda_{0}-\lambda_{n})\,,
\end{equation}
by (i), $L(u_{n})\le C_{0}$. Therefore, by $(D)$, from \eqref{leq}--\eqref{geq} it follows that
\[
c_{\lambda_{0}}\le\max_{s\in[0,1]}E_{\lambda_{0}}(\gamma_{n}(s))\le c_{\lambda_{0}}+o(1)\,.
\]
Hence, also (ii) is proved.
\end{proof}

\begin{Lemma}\label{L:Meps}
If $\lambda_{0}\in(\alpha,\beta)$ is such that $(D)$ holds, and $C_{0}$ is the constant (depending on $\lambda_{0}$) given by Lemma \ref{L:bound}, then for every $\varepsilon>0$ the set $M_{\varepsilon}:=\{u\in H^{1}\colon L(u)\le C_{0}+1\,,~|E_{\lambda_{0}}(u)-c_{\lambda_{0}}|\le\varepsilon\}$ is nonempty and $\inf\{\|E'_{\lambda_{0}}(u)\|\colon u\in M_{\varepsilon}\}=0$.
\end{Lemma}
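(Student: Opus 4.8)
The plan is to extract both assertions from Lemma \ref{L:bound} together with a deformation argument localized to the length slab $\{u\colon L(u)\le C_0+1\}$. First I would settle nonemptiness. The points $u_n:=\gamma_n(s_n)$ built in the proof of Lemma \ref{L:bound} satisfy $L(u_n)\le C_0$ by part (i) (since $E_{\lambda_0}(u_n)\ge c_{\lambda_0}$ by \eqref{geq}) and $E_{\lambda_0}(u_n)\to c_{\lambda_0}$ by part (ii); hence $u_n\in M_\varepsilon$ for $n$ large, so $M_\varepsilon\ne\emptyset$. It is convenient to note that $M_\varepsilon$ shrinks as $\varepsilon\downarrow 0$, so it suffices to treat small $\varepsilon$; in particular, since $c_{\lambda_0}>0$ by Proposition \ref{P:mp} while $E_{\lambda_0}$ vanishes on constants, for $\varepsilon$ small $M_\varepsilon$ stays bounded away from $\mathbb{R}^2$ (indeed $L$ is bounded below on it), so that $E'_{\lambda_0}$ is well defined there.

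For the equality $\inf\{\|E'_{\lambda_0}(u)\|\colon u\in M_\varepsilon\}=0$ I would argue by contradiction, assuming $\|E'_{\lambda_0}(u)\|\ge\delta_0>0$ for all $u\in M_\varepsilon$, and then produce a path in $\Gamma$ whose maximal $E_{\lambda_0}$-value lies strictly below $c_{\lambda_0}$. Write $\mu_n:=\lambda_0-\lambda_n\to 0^+$. Combining \eqref{leq} with $L(u_n)\le C_0$ and property $(D)$, the maximum of $E_{\lambda_0}$ along $\gamma_n$ obeys $\max_s E_{\lambda_0}(\gamma_n(s))\le c_{\lambda_0}+C_2\mu_n$ for some constant $C_2\ge 1$, whereas by Lemma \ref{L:bound}(i) every point with $E_{\lambda_0}(\gamma_n(s))>c_{\lambda_0}-\mu_n$ lies in $S:=\{u\colon L(u)\le C_0\}$. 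This is the decisive structural input: the part of the path near the top level is automatically confined to the slab where the gradient estimate holds.

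I would then apply a quantitative deformation lemma relative to the set $S$, realized through a downward pseudo-gradient flow of $E_{\lambda_0}$. Choosing the energy window so as to push $\{E_{\lambda_0}\le c_{\lambda_0}+C_2\mu_n\}\cap S$ into $\{E_{\lambda_0}\le c_{\lambda_0}-\mu_n\}$, the required displacement is of order $\mu_n/\delta_0\to 0$; since $L$ is $1$-Lipschitz for the norm $\|\cdot\|$, the flow stays inside $\{L\le C_0+1\}$ and, for $n$ large, inside $\{|E_{\lambda_0}-c_{\lambda_0}|\le\varepsilon\}$, that is inside $M_\varepsilon$, exactly where the contradiction hypothesis $\|E'_{\lambda_0}\|\ge\delta_0$ supplies the gradient bound needed to run the lemma. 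Applying the resulting map $\eta(1,\cdot)$ to $\gamma_n$ yields $\tilde\gamma_n\in\Gamma$ (the endpoints, with energies $0$ and $\le 0$, fall outside the window and are fixed), and a case split — points with $E_{\lambda_0}(\gamma_n(s))\le c_{\lambda_0}-\mu_n$ are not raised, while those above lie in $S$ and are pushed down to $\le c_{\lambda_0}-\mu_n$ — gives $\max_s E_{\lambda_0}(\tilde\gamma_n(s))\le c_{\lambda_0}-\mu_n<c_{\lambda_0}$, contradicting the definition of $c_{\lambda_0}$.

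The main obstacle is precisely this localization. A standard deformation would require a gradient bound on the whole level strip $\{|E_{\lambda_0}-c_{\lambda_0}|\le\varepsilon\}$, which is unavailable because $E_{\lambda_0}$ offers no compactness control on the length. The crux is to match the energy drop and the displacement, both forced to be $O(\mu_n)$, so that the flow never leaves $\{L\le C_0+1\}$, where the hypothesis $\|E'_{\lambda_0}\|\ge\delta_0$ lives; this is exactly why the specially constructed paths $\gamma_n$ of Lemma \ref{L:bound}, rather than arbitrary near-optimal paths, are indispensable. A minor technical point to verify is that the flow remains in $H^1\setminus\mathbb{R}^2$, which holds since $M_\varepsilon$ is bounded away from constants and the displacement is small.
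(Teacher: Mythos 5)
Your proposal is correct and follows essentially the same route as the paper: nonemptiness is read off from Lemma \ref{L:bound}, and the gradient estimate is obtained by contradiction, deforming the special paths $\gamma_{n}$ of Lemma \ref{L:bound} with a pseudo-gradient flow whose action is confined to the bounded-length slab, then running the identical case split (points above level $c_{\lambda_{0}}-(\lambda_{0}-\lambda_{n})$ lie in $\{L\le C_{0}\}$ and get pushed down; the rest are not raised) to contradict the definition of $c_{\lambda_{0}}$. Your only addition is to spell out why the ``standard deformation argument'' the paper invokes can be localized --- small displacement plus the $1$-Lipschitz continuity of $L$ keep the flow inside $M_{\varepsilon}$ --- which is a legitimate filling-in of the same argument rather than a different one.
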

\begin{proof} Fixing $\varepsilon>0$, by Lemma \ref{L:bound}, there exists $\gamma\in\Gamma$ and $s\in[0,1]$ such that $c_{\lambda_{0}}-\varepsilon\le E_{\lambda_{0}}(\gamma(s))\le c_{\lambda_{0}}+\varepsilon$ and then $L(\gamma(s))\le C_{0}$. Hence $M_{\varepsilon}\ne\varnothing$. Let us prove the second part of the Lemma. Arguing by contradiction, assume that there exists $\varepsilon_{0}\in\left(0,\frac{c_{\lambda_{0}}}{2}\right)$ such that
\[
\|E'_{\lambda_{0}}(u)\|\ge\varepsilon_{0}\quad\forall u\in M_{\varepsilon_{0}}\,.
\]
Then, by a standard deformation argument, there exist $\varepsilon_{1}\in(0,\varepsilon_{0})$ and a homeomorphism $\eta\colon H^{1}\to H^{1}$ such that
\begin{gather}
\label{eta1}
E_{\lambda_{0}}(\eta(u))\le E_{\lambda_{0}}(u)\quad\forall u\in H^{1}\\
\label{eta2}
\eta(u)=u\quad\text{if }|E_{\lambda_{0}}(u)-c_{\lambda_{0}}|\ge\varepsilon_{0}\\
\label{eta3}
E_{\lambda_{0}}(\eta(u))\le c_{\lambda_{0}}-\varepsilon_{1}\quad\text{if }E_{\lambda_{0}}(u)<c_{\lambda_{0}}+\varepsilon_{1}~~\text{and}~~L(u)\le C_{0}\,.
\end{gather}
Let $(\lambda_{n})_{n\in\mathbb{N}}\subset(\alpha,\beta)$ be a sequence satisfying $(D)$ and let $(\gamma_{n})_{n\in\mathbb{N}}\subset\Gamma$ be given by Lemma \ref{L:bound}.  Let $\widetilde{\gamma}_{n}=\eta\circ\gamma_{n}\in C([0,1],H^{1})$. When $E_{\lambda_{0}}(\gamma_{n}(s))\le c_{\lambda_{0}}-\varepsilon_{0}$ then, by \eqref{eta2}, $\widetilde\gamma_{n}(s)=\gamma_{n}(s)$ and, consequently, also $E_{\lambda_{0}}(\widetilde\gamma_{n}(s))\le c_{\lambda_{0}}-\varepsilon_{1}$. In particular this holds true for $s=0$ since $E_{\lambda_{0}}(\gamma_{n}(0))=E_{\lambda_{0}}(0)=0$ and $c_{\lambda_{0}}-\varepsilon_{0}>\frac{c_{\lambda_{0}}}{2}>0$. The same occurs at $s=1$, since $E_{\lambda_{0}}(\gamma_{n}(1))=E_{\lambda_{0}}(\overline{u})<0$ (Proposition \ref{P:mp}). Thus $\widetilde\gamma_{n}\in \Gamma$. For $n$ large enough $\max_{s\in[0,1]}E_{\lambda_{0}}(\gamma_{n}(s))<c_{\lambda_{0}}+\varepsilon_{1}$. In particular, when $E_{\lambda_{0}}(\gamma_{n}(s))\ge c_{\lambda_{0}}-(\lambda_{0}-\lambda_{n})$, then $L(\gamma_{n}(s)\le C_{0}$ and, by \ref{eta3}, $E_{\lambda_{0}}(\widetilde\gamma_{n}(s))\le c_{\lambda_{0}}-\varepsilon_{1}$. Otherwise, by \eqref{eta1},  $E_{\lambda_{0}}(\widetilde\gamma_{n}(s))\le E_{\lambda_{0}}(\gamma_{n}(s))< c_{\lambda_{0}}-(\lambda_{0}-\lambda_{n})<c_{\lambda_{0}}$. Thence $\max_{s\in[0,1]}E_{\lambda_{0}}(\widetilde\gamma_{n}(s))<c_{\lambda_{0}}$, contrary to the definition of $c_{\lambda_{0}}$. 
\end{proof}

\noindent
\emph{Proof of Proposition \ref{P:PS-bound}.} It plainly follows from Lemma \ref{L:Meps}, taking a sequence $\varepsilon_{n}\to 0^{+}$.

\section{Proof of Theorem \ref{T:main}}

Fix an interval $[\alpha,\beta]\subset\mathbb{R}\setminus\{0\}$ and set 
\begin{equation}
\label{LambdaD}
\Lambda_{\alpha,\beta}:=\{\lambda_{0}\in(\alpha,\beta)\colon \lambda_{0}\text{ satisfies (D)}\}
\end{equation} 
For every $\lambda_{0}\in \Lambda_{\alpha,\beta}$, by Proposition \ref{P:PS-bound}, there exists a PS sequence $(u_{n})_{n}\subset H^1\setminus\mathbb{R}^2$ for $E_{\lambda_{0}}$ at level $c_{\lambda_{0}}$, with $\sup_{n}L(u_{n})<\infty$. For every $n\in\mathbb{N}$ there exists $m_{n}\in\mathbb{Z}^{2}$ such that
\[
\int_{0}^{1}(u_{n}+m_{n,1}a\mathbf{e}_{1}+m_{n,2}b\mathbf{e}_{2})\,dt\in[0,a]\times[0,b]\quad\forall n\in\mathbb{N}\,.
\]
Since, in general, for every $u\in  H^1\setminus\mathbb{R}^2$ and for every $m_1,m_2\in\mathbb{Z}$,
\begin{gather*}
L(u+m_{1}a\mathbf{e}_{1}+m_{2}b\mathbf{e}_{2})=L(u)\,,\\G(u+m_{1}a\mathbf{e}_{1}+m_{2}b\mathbf{e}_{2})=G(u)\,,\\\|E'_{\lambda}(u+m_{1}a\mathbf{e}_{1}+m_{2}b\mathbf{e}_{2})\|=\|E'_{\lambda}(u)\|\,,
\end{gather*}
setting
\[
v_n:=u_{n}+m_{n,1}a\mathbb{e}_{1}+m_{n,2}a\mathbb{e}_{2}
\]
the sequence $(v_n)_{n}$ is a PS sequence for $E_{\lambda_{0}}$ at level $c_{\lambda_{0}}$, and it is bounded in $H^1$. Passing to a subsequence, if necessary, we can assume that $v_{n}\rightharpoonup u$ weakly in $H^{1}$. Then we apply:
\begin{Lemma}\label{L:compactness}
If $(u_{n})\subset H^{1}\setminus\mathbb{R}^{2}$ satisfies $\|E'_{\lambda}(u_{n})\|\to 0$ and $u_{n}\rightharpoonup u$ weakly in $H^{1}$, then $u_{n}\to u$ strongly in $H^{1}$.
\end{Lemma}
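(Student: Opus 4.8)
The plan is to exploit the compactness of the one-dimensional Sobolev embedding $H^{1}(\mathbb{R}/\mathbb{Z},\mathbb{R}^{2})\hookrightarrow C^{0}(\mathbb{R}/\mathbb{Z},\mathbb{R}^{2})$ together with the explicit form \eqref{E'} of $E'_{\lambda}$, testing the differential against $u_{n}-u$. First I would record the standing facts: a weakly convergent sequence is bounded, so $\sup_{n}\|u_{n}\|<\infty$ and in particular $L(u_{n})=\|\dot u_{n}\|_{L^{2}}$ stays bounded; moreover, by the compact embedding in dimension one, $u_{n}\to u$ uniformly on $\mathbb{R}/\mathbb{Z}$ and $\dot u_{n}\rightharpoonup\dot u$ weakly in $L^{2}$. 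Note that $u$ need not belong to $H^{1}\setminus\mathbb{R}^{2}$, so I will be careful never to evaluate $E'_{\lambda}$ at the limit $u$.

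Next I would plug $h=u_{n}-u$ into \eqref{E'} with $u_{n}$ in place of $u$. Since $\|E'_{\lambda}(u_{n})\|\to 0$ and $(u_{n}-u)$ is bounded in $H^{1}$, the quantity $E'_{\lambda}(u_{n})[u_{n}-u]$ tends to $0$. The nonlinear term $\lambda\int_{0}^{1}K(u_{n})(u_{n}-u)\cdot i\dot u_{n}\,dt$ is bounded by $|\lambda|\,\|K\|_{\infty}\,\|u_{n}-u\|_{\infty}\,L(u_{n})$, which vanishes because $\|u_{n}-u\|_{\infty}\to 0$ while $\|K\|_{\infty}$ (finite by $(K1)$, or by $(K4)$ in the setting of Theorem \ref{T:2}) and $L(u_{n})$ remain bounded. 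Consequently $L(u_{n})^{-1}\int_{0}^{1}\dot u_{n}\cdot(\dot u_{n}-\dot u)\,dt\to 0$. Writing $\int_{0}^{1}\dot u_{n}\cdot(\dot u_{n}-\dot u)\,dt=\|\dot u_{n}-\dot u\|_{L^{2}}^{2}+\langle\dot u,\dot u_{n}-\dot u\rangle_{L^{2}}$ and using $\dot u_{n}\rightharpoonup\dot u$ to kill the last inner product, this becomes $L(u_{n})^{-1}\big(\|\dot u_{n}-\dot u\|_{L^{2}}^{2}+o(1)\big)\to 0$.

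The main obstacle is the degenerate weight $L(u_{n})^{-1}$, which could a priori blow up: the assertion amounts to $L(u_{n})\to L(u)$, and I must rule out that $\liminf L(u_{n})=0$ spoils the previous relation. I would resolve this by a dichotomy on subsequences. If $L(u_{n})\to 0$ along a subsequence, then $\dot u_{n}\to 0=\dot u$ strongly in $L^{2}$ and $u$ is constant; together with $\int_{0}^{1}u_{n}\,dt\to\int_{0}^{1}u\,dt$ (which follows from uniform convergence), this already gives $u_{n}\to u$ strongly. Otherwise, along a subsequence $L(u_{n})\to\ell>0$, so $L(u_{n})^{-1}$ stays bounded and the relation forces $\|\dot u_{n}-\dot u\|_{L^{2}}^{2}\to 0$; combined once more with the convergence of the means, this yields $\|u_{n}-u\|\to 0$ on that subsequence.

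Finally I would assemble these pieces by the standard subsequence principle: every subsequence of $(u_{n})$ admits a further subsequence converging strongly to $u$ (by the dichotomy above, $u$ being the common weak limit), whence the whole sequence converges strongly in $H^{1}$. I expect the only genuinely delicate point to be the bookkeeping around $L(u_{n})^{-1}$; the estimate of the nonlinear term and the passage to the $L^{2}$-limit are routine once the uniform convergence $u_{n}\to u$ is secured.
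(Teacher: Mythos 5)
Your proposal is correct and follows essentially the same route as the paper's proof: testing $E'_{\lambda}(u_{n})$ against $u_{n}-u$, using the compact Sobolev embedding to kill the curvature term, and splitting into the cases $L(u_{n})\to 0$ and $L(u_{n})\to\ell>0$. Your explicit invocation of the subsequence principle at the end is a minor tidying of a step the paper treats as "not restrictive", but it is not a different argument.
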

Let us complete the proof of Theorem \ref{T:main}. By Lemma \ref{L:compactness} $v_n\to u$ strongly in $H^{1}$. Then $E_{\lambda_0}=c_{\lambda_0}$ and $E'_{\lambda_0}(u)=0$. By Proposition \ref{P:regularity}, $u$ is a classical, 1-periodic solution of  \eqref{lambda-eq} with $\lambda=\lambda_0$ and is nonconstant because $c_{\lambda_0}\ne 0$. Thus $u$ is a parametrization of a $\lambda_0 K$-loop. . By the Denjoy-Young-Saks Theorem \cite{Sak64},  for every interval $[\alpha,\beta]\subset\mathbb{R}\setminus\{0\}$ the set $\Lambda_{\alpha,\beta}$ has full measure in $[\alpha,\beta]$. Therefore, we conclude that for almost every $\lambda\in\mathbb{R}$ there exists a $\lambda K$-loop at energy level $c_\lambda$. 
\medskip

\noindent
\emph{Proof of Lemma \ref{L:compactness}.} The proof is standard (see, e.g. \cite{Mus11,CorMus23}). We sketch it for the reader's convenience. Set
\[
\int_{0}^{1}u\,dt:=\overline{u}\quad\text{and}\quad \int_{0}^{1}u_{n}\,dt:=\overline{u}_{n}~\forall n\in\mathbb{N}\,.
\]
By the compact embedding of $H^{1}$ into $L^{1}$, $\overline{u}_{n}\to\overline{u}$ in $\mathbb{R}^{2}$. If $L(u_{n})\to 0$ then 
\[
\|u_{n}-\overline{u}\|^{2}=L(u_{n})^{2}+|\overline{u}_{n}-\overline{u}|^{2}\to 0
\]
and hence $u=\overline{u}$ and the lemma is proved. If $L(u_{n})\not\to 0$, it is not restrictive to assume that $L(u_{n})\to\ell\in(0,\infty)$. Then, by \eqref{E'} and since $u_{n}\rightharpoonup u$ weakly in $H^{1}$
\[\begin{split}
\int_{0}^{1}|\dot{u}_{n}-\dot{u}|^{2}\,dt&=\int_{0}^{1}\dot{u}_{n}\cdot(\dot{u}_{n}-\dot{u})\,dt+o(1)\\
&=\ell\left(E'_{\lambda}(u_{n})[u_{n}-u]-\lambda\int_{0}^{1}K(u_{n})(u_{n}-u)\cdot i\dot{u}_{n}\,dt\right)+o(1)\,.
\end{split}\]
Since $\|E'_{\lambda}(u_{n})\|\to 0$ and $(u_{n})$ is bounded in $H^{1}$, $E'_{\lambda}(u_{n})[u_{n}-u]\to 0$. Moreover, using the compact embedding of $H^{1}$ into $L^{2}$ and the fact that $K$ is bounded, $K(u_{n})(u_{n}-u)\to 0$ in $L^{2}$ and then, since $(\dot{u}_{n})$ is bounded in $L^{2}$,
\[
\int_{0}^{1}K(u_{n})(u_{n}-u)\cdot i\dot{u}_{n}\,dt\to 0\,.
\]
Thence $\dot{u}_{n}\to\dot{u}$ in $L^{2}$ and the conclusion follows also in this case.\hfill$\square$

\section{The case of prescribed curvature constant at infinity}
In this Section we consider the case in which $K\colon\mathbb{R}^2\to\mathbb{R}$ is a continuous function satisfying (K4).
We set
\[
G_0(u)=\frac12\int_0^1u\cdot i\dot u\,dt\quad\text{and}\quad G_1(u)=\frac12\int_0^1Q_1(u)\cdot i\dot u\,dt
\]
with $Q_1\in C^1(\mathbb{R}^2,\mathbb{R}^2)$ such that $\mathrm{div}(Q_1)=K_1$. Then we define
\begin{equation}\label{E0l}
E_{0,\lambda}(u):= L(u)+\lambda K_0 G_0(u)\,,\quad E_\lambda(u):=E_{0,\lambda}(u)+\lambda G_1(u)\quad\forall u\in H^1\,.
\end{equation}
Also in this case, our first goal is to show that $E_\lambda$ has a mountain pass geometry as soon as $\lambda\ne 0$. It is not restrictive to assume $\lambda>0$, otherwise we change $K$ with $-K$. We can always use Lemma \ref{L:min} but not Lemma \ref{L:rect}. In its place, we have:

\begin{Lemma}\label{L:negative}
If $K\colon\mathbb{R}^2\to\mathbb{R}$ is a continuous function satisfying $(K4)$ with $K_0>0$, then for every $\alpha,\beta>0$ with $\alpha<\beta$ there exist $r_0,R>0$ such that, setting $u_0(t)=r_0e^{2\pi i t}$, one has
\begin{equation}\label{eq:negative}
L(u_0+z)>\frac{2\pi}{\lambda\left\|K\right\|_\infty}~~\text{and}~~ E_{\lambda}(u_0+z)<0\quad\forall z\in\mathbb{R}^2~~\text{with }|z|\ge R\,,~~\forall \lambda\in[\alpha,\beta]\,.
\end{equation}
If $K_0<0$ the same conclusion holds with $u_0(t)=r_0e^{-2\pi it}$. 
\end{Lemma}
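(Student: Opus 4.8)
The plan is to take for $u_0$ a round circle of fixed radius $r_0$ and translate it far away by a vector $z$, using that $K_1$ is negligible at infinity so that $E_\lambda(u_0+z)$ is governed by its constant-curvature part $\lambda K_0 G_0$, which is strictly negative once the orientation is matched to the sign of $K_0$ and $r_0$ is large. First I would record the $z$-independent quantities. Since $K_1$ is continuous and vanishes at infinity it is bounded, so $\|K\|_\infty<\infty$. For $u_0(t)=r_0e^{2\pi it}$ one has $\dot u_0(t)=2\pi ir_0e^{2\pi it}$, hence $L(u_0+z)=L(u_0)=2\pi r_0$ for every $z$ (translation does not change the derivative of a circle), and a direct computation gives $G_0(u_0+z)=G_0(u_0)=-\pi r_0^2$ (the correction $\tfrac12 z\cdot\int_0^1 i\dot u_0\,dt$ vanishes, as $\int_0^1\dot u_0\,dt=0$). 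Thus
\[
E_\lambda(u_0+z)=2\pi r_0-\lambda K_0\pi r_0^2+\lambda\,G_1(u_0+z),
\]
whose constant-curvature part $2\pi r_0-\lambda K_0\pi r_0^2=\pi r_0(2-\lambda K_0 r_0)$ is strictly negative as soon as $r_0>2/(\lambda K_0)$.

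I would then fix $r_0>\max\{2/(\alpha K_0),\,1/(\alpha\|K\|_\infty)\}$. The first inequality in \eqref{eq:negative} is immediate, since $L(u_0+z)=2\pi r_0>2\pi/(\lambda\|K\|_\infty)$ for every $\lambda\ge\alpha$. Moreover, because $\lambda\mapsto\lambda K_0\pi r_0^2-2\pi r_0$ is increasing (as $K_0,r_0>0$), the number $\delta:=\alpha K_0\pi r_0^2-2\pi r_0>0$ furnishes a uniform lower bound $\lambda K_0\pi r_0^2-2\pi r_0\ge\delta$ for all $\lambda\in[\alpha,\beta]$.

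The heart of the matter, and the step I expect to be the main obstacle, is to show that $G_1(u_0+z)\to0$ as $|z|\to\infty$. One cannot bound the integrand $Q_1(u_0+z)\cdot i\dot u_0$ directly, because the primitive field $Q_1$ need not decay at infinity even though $K_1$ does. The remedy is to pass to the enclosed region: since $u_0+z$ is a counterclockwise simple circle bounding the disc $D_{r_0}(z)$, the divergence theorem applied to the $C^1$ field $Q_1$ (equivalently, formula \eqref{KirschLaurain} with $K_1$ in place of $K$ and $\mathrm{Ind}_{u_0+z}=\mathbf 1_{D_{r_0}(z)}$) gives
\[
G_1(u_0+z)=-\tfrac12\int_{D_{r_0}(z)}K_1(x,y)\,dx\,dy,
\]
whence $|G_1(u_0+z)|\le\tfrac12\pi r_0^2\,\sup_{D_{r_0}(z)}|K_1|$. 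As $|z|\to\infty$ with $r_0$ fixed the whole disc escapes to infinity, so $\sup_{D_{r_0}(z)}|K_1|\to0$ and therefore $G_1(u_0+z)\to0$.

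Combining the pieces, $E_\lambda(u_0+z)\le-\delta+\beta\,|G_1(u_0+z)|$ uniformly in $\lambda\in[\alpha,\beta]$, so choosing $R$ large enough that $\beta\,|G_1(u_0+z)|<\delta$ whenever $|z|\ge R$ yields the second inequality in \eqref{eq:negative} for all $\lambda\in[\alpha,\beta]$ simultaneously; note that $R$ depends only on $r_0$, on $\delta$, $\beta$, and on the decay of $K_1$, not on the individual $\lambda$. Finally, the case $K_0<0$ is handled by reversing the orientation, i.e.\ taking $u_0(t)=r_0e^{-2\pi it}$: this flips the sign of $G_0$ to $+\pi r_0^2$, so that $\lambda K_0 G_0(u_0+z)=\lambda K_0\pi r_0^2<0$ once more, and the remainder of the argument is unchanged.
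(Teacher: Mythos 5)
Your proof is correct and follows essentially the same route as the paper: translate a circle of fixed radius $r_0$ far away, note $L$ and $G_0$ are translation-invariant with $G_0=-\pi r_0^2$, choose $r_0$ so that the constant-curvature part is uniformly negative on $[\alpha,\beta]$ and $2\pi r_0>2\pi/(\alpha\|K\|_\infty)$, and kill $G_1(u_0+z)$ via the index/divergence formula reducing it to an integral of $K_1$ over the distant disc $D_{r_0}(z)$. The only (inessential) discrepancy is the factor $\tfrac12$ in $G_1$, where you follow the paper's literal definition while its own proof omits it; this does not affect the argument.
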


\begin{proof} Let us consider the case $K_0>0$. One has that $L(u_0+z)=L(u_0)=2\pi r_0$ for every $r_0>0$ and $z\in\mathbb{R}^2$. Moreover, by \eqref{KirschLaurain}
\[
G_0(u_0+z)=-\pi r_0^2\quad\text{and}\quad G_1(u_0+z)=-\int_{D_{r_0}(z)}K_1(x,y)\,dx\,dy\quad\forall r_0>0\,,~\forall z\in\mathbb{R}^2
\]
where $D_{r_0}(z)$ is the disc centered at $z$ and with radius $r_0$. Hence, if $\lambda\in[\alpha,\beta]$, 
\[
E_\lambda(u_0+z)\le 2\pi r_0-\alpha \pi r_0^2K_0+\beta\int_{D_{r_0}(z)}K_1(x,y)\,dx\,dy\,.
\]
Since $K_0>0$ we can find $r_0>0$ such that 
\begin{equation}\label{eps0}
2\pi r_0-\alpha \pi r_0^2K_0<0\quad\text{and}\quad 2\pi r_0>\frac{2\pi}{\beta\left\|K\right\|_\infty}\,.
\end{equation}
Moreover, by (K4), there exists $R>0$ such that 
\[
\beta\int_{D_{r_0}(z)}|K_1(x,y)|\,dx\,dy<-\pi r_0+\frac{\alpha \pi r_0^2K_0}{2}\quad\forall z\in\mathbb{R}^2\quad\text{with}\quad|z|\ge R\,.
\]
Then the conclusion follows. Similarly when $K_0<0$.
\end{proof}

Now we introduce the mountain pass level and we prove some some bounds. 

\begin{Proposition}\label{mp2}
Assume that $K\colon\mathbb{R}^{2}\to\mathbb{R}$ is a continuous function satisfying $(K4)$. Let $\alpha,\beta>0$ with $\alpha<\beta$ and let $r_0,R,u_0$ as in Lemma \ref{L:negative}. Set $\overline{u}:=u_0+z_0$ for some fixed $z_0\in\mathbb{R}^2$ with $|z_0|\ge R$, $\Gamma:=\{\gamma\in C([0,1],H^{1})\colon\gamma(0)=0\,,~\gamma(1)=\overline{u}\}$ and 
\begin{equation}\label{mpEl}
c_{\lambda}:=\inf_{\gamma\in\Gamma}\max_{s\in[0,1]}E_{\lambda}(\gamma(s))\,.
\end{equation}
Then 
\[
\frac{\pi}{\lambda\left\|K\right\|_{\infty}}\le c_{\lambda}\le\frac{\pi}{\lambda\left|K_0\right|}~~\forall\lambda\in[\alpha,\beta]\,.
\]
\end{Proposition}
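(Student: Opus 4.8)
The plan is to prove the two inequalities separately. The lower bound $c_\lambda\ge\frac{\pi}{\lambda\|K\|_\infty}$ follows essentially verbatim from the argument for Proposition~\ref{P:mp}. Since $\overline u=u_0+z_0$ with $|z_0|\ge R$, Lemma~\ref{L:negative} guarantees $L(\overline u)>\frac{2\pi}{\lambda\|K\|_\infty}$ for every $\lambda\in[\alpha,\beta]$, whereas $L(\gamma(0))=L(0)=0$. Hence, by continuity, every $\gamma\in\Gamma$ crosses the level $\{L=\frac{2\pi}{\lambda\|K\|_\infty}\}$ at some $\overline s$, and there Lemma~\ref{L:min} yields $E_\lambda(\gamma(\overline s))\ge\frac12 L(\gamma(\overline s))=\frac{\pi}{\lambda\|K\|_\infty}$. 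Taking the maximum in $s$ and the infimum over $\gamma$ gives the bound, uniformly in $\lambda$.

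For the upper bound I would assume $K_0>0$ (so that $|K_0|=K_0$; the case $K_0<0$ is symmetric, using clockwise circles), and exploit that $c_\lambda$ is an \emph{infimum} over paths together with the decay of $K_1$. The obstacle is precisely that $G_1$ is not sign-definite, so a single circle inflated at $z_0$ need not keep the energy below $\frac{\pi}{\lambda K_0}$. To get around this, for each parameter point $w$ with $|w|\ge R$ I build a path $\gamma_w\in\Gamma$ in three stages: (i) translate the constant $0$ to the constant $w$, where $L\equiv 0$ and hence $E_\lambda\equiv 0$; (ii) inflate the circle $t\mapsto w+\rho e^{2\pi i t}$ from $\rho=0$ to $\rho=r_0$; (iii) translate the radius-$r_0$ circle from center $w$ to center $z_0$ along a path contained in $\{|z|\ge R\}$, which is possible since that set is connected. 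Continuity of $\gamma_w$ as a map in $C([0,1],H^1)$ is routine.

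Along stage (ii), using $L=2\pi\rho$, $G_0=-\pi\rho^2$ and $G_1=-\int_{D_\rho(w)}K_1$ (the computations of Lemma~\ref{L:negative} with radius $\rho$), the energy equals $2\pi\rho-\lambda K_0\pi\rho^2$ plus a term bounded in modulus by $\lambda\int_{D_{r_0}(w)}|K_1|$, since $D_\rho(w)\subseteq D_{r_0}(w)$ for $\rho\le r_0$. As the quadratic $2\pi\rho-\lambda K_0\pi\rho^2$ has unconstrained maximum $\frac{\pi}{\lambda K_0}$, this stage contributes at most $\frac{\pi}{\lambda K_0}+\lambda\int_{D_{r_0}(w)}|K_1|$. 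Along stage (iii) the curve is $u_0+z$ with $|z|\ge R$, so $E_\lambda<0<\frac{\pi}{\lambda K_0}$ directly from \eqref{eq:negative}. Consequently $\max_{s}E_\lambda(\gamma_w(s))\le\frac{\pi}{\lambda K_0}+\beta\int_{D_{r_0}(w)}|K_1|$ for every admissible $w$, uniformly in $\lambda\in[\alpha,\beta]$.

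The main difficulty, namely eliminating the $K_1$-correction, is then dissolved by letting $|w|\to\infty$: by $(K4)$ one has $\int_{D_{r_0}(w)}|K_1|\to 0$, and since each $\gamma_w$ is admissible, $c_\lambda\le\inf_{|w|\ge R}\bigl(\frac{\pi}{\lambda K_0}+\beta\int_{D_{r_0}(w)}|K_1|\bigr)=\frac{\pi}{\lambda K_0}$. I expect the only further points needing care to be the continuity of the three-stage family (straightforward for piecewise-in-$s$ families of constants and circles) and the uniformity of the estimates over the compact interval $[\alpha,\beta]$; the latter is immediate because each bound is monotone in $\lambda$ and dominated by the endpoint $\beta$ (while the lower bound uses $\alpha$).
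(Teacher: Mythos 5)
Your proposal is correct and is essentially the paper's own argument: the lower bound is obtained exactly as in Proposition \ref{P:mp} via Lemmata \ref{L:min} and \ref{L:negative}, and the upper bound uses the same three-stage path (translate the constant to a far-away center, inflate the circle there, then slide it back to $z_0$ inside $\{|z|\ge R\}$), with your limit $|w|\to\infty$ playing the role of the paper's choice of $z_\varepsilon$ with $|z_\varepsilon|\ge R_\varepsilon$ and $\varepsilon\to 0$.
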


\begin{proof} We assume $K_0>0$. The lower bound for $c_\lambda$ can proved as in Proposition \ref{P:mp}, using Theorem \ref{ii} and Lemma \ref{L:negative}. Let us check the upper bound. Let
\[
\varepsilon_{0}:=-2\pi r_{0}+\alpha\pi r_{0}^{2}K_{0}
\]
and observe that $\varepsilon_{0}>0$, by \eqref{eps0}. 
For every $\varepsilon\in(0,\varepsilon_{0})$ there exists $R_\varepsilon\ge R$ such that 
\[
\beta\int_{D_{r_0}(z)}|K_{1}(x,y)|\,dx\,dy<\varepsilon\quad\forall z\in\mathbb{R}^2\quad\text{with}\quad |z|\ge R_\varepsilon\,.
\]
Fix $z_\varepsilon\in \mathbb{R}^2$ with $|z_\varepsilon|\ge R_\varepsilon$. 
Let $\gamma=\gamma_1\cup\gamma_2\cup\gamma_3$ with $\gamma_1=[0,z_\varepsilon]$, $\gamma_2(s)=z_\varepsilon+su_0$ ($s\in[0,1]$) and $\gamma_3(s)=g(s)+u_0$ with $g\in C([0,1],\mathbb{R}^2)$ such that $g(0)=z_\varepsilon$, $g(1)=z_{0}$ and $|g(s)|\ge R$ for every $s\in[0,1]$. We compute 
\begin{gather}\label{Eg1}
E_{\lambda}(\gamma_{1}(s))=0\\\label{Eg2}
E_{\lambda}(\gamma_{2}(s))=2\pi r_{0}s-\lambda K_{0}s\pi r_{0}^{2}-\lambda\int_{D_{sr_{0}}(z_{\varepsilon})}K_{1}(x,y)\,dx\,dy\\\label{Eg3}
E_{\lambda}(\gamma_{3}(s))=E_{0,\lambda}(u_{0})-\lambda\int_{D_{sr_{0}}(g(s))}K_{1}(x,y)\,dx\,dy
\end{gather}
for every $s\in[0,1]$. In particular
\[
E_{\lambda}(\gamma_{3}(s))\le 2\pi r_{0}-\lambda\pi r_{0}^{2}K_{0}+\lambda\int_{D_{r_{0}}(g(s))}|K_{1}(x,y)|\,dx\,dy\le 2\pi r_{0}-\alpha\pi r_{0}^{2}K_{0}+\varepsilon< 0
\]
because $\varepsilon<\varepsilon_{0}$. Hence $\gamma\in\Gamma$ and 
\[\begin{split}
\max_{s\in[0,1]}E_{\lambda}(\gamma(s))&=\max_{s\in[0,1]}E_{\lambda}(\gamma_{2}(s))\\
&\le \max_{s\in[0,1]}\left[2\pi r_{0}s-\lambda K_{0}s^{2}\pi r_{0}^{2}+\beta\int_{D_{r_{0}}(z_{\varepsilon})}|K_{1}(x,y)|\,dx\,dy\right]\\
&\le \frac{\pi}{\lambda K_{0}}+\varepsilon\,.
\end{split}\]
Therefore, by definition of $c_{\lambda}$ and by the arbitrariness of $\varepsilon>0$,  the upper bound for $c_{\lambda}$ is proved. The case $K_{0}<0$ is similar. 
\end{proof}

Let $\Lambda_{\alpha,\beta}$ be defined as in \eqref{LambdaD}. Arguing as for Proposition \ref{P:PS-bound}, we have
\begin{Lemma}\label{L:PSseq}
If $\lambda\in\Lambda_{\alpha,\beta}$ then there exists a PS sequence $(u_{n})\subset H^{1}\setminus\mathbb{R}^{2}$ for $E_{\lambda}$ at level $c_{\lambda}$, with $\sup_{n}L(u_{n})<\infty$.
\end{Lemma}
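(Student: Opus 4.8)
The plan is to mirror the proof of Proposition~\ref{P:PS-bound} almost verbatim, since the monotonicity-trick machinery developed in Lemmas~\ref{L:bound} and \ref{L:Meps} was written in terms of the abstract ingredients $L$, $G$, the mountain-pass class $\Gamma$, the level $c_\lambda$, and the Denjoy property $(D)$, none of which refer specifically to the periodic setting of Theorem~\ref{T:main}. The only structural facts that those lemmas actually consume are: first, that $E_\lambda(u)=L(u)+\lambda G(u)$ splits as a length term plus $\lambda$ times a functional $G$ satisfying the isoperimetric bound $|G(u)|\le\frac{\|K\|_\infty}{4\pi}L(u)^2$ of Theorem~\ref{ii}; and second, that $E_\lambda$ has a mountain-pass geometry with a common admissible endpoint $\overline u$ valid for all $\lambda\in[\alpha,\beta]$. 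In the present $(K4)$ setting the functional is $E_\lambda=L+\lambda K_0G_0+\lambda G_1=L+\lambda G$ with $G:=K_0G_0+G_1$, and $G$ agrees with the functional $\int_0^1 Q(u)\cdot i\dot u\,dt$ of \eqref{eqG} for the divergence field $Q=(K_0\tfrac{y}{2}+\ldots)$ associated to $K=K_0+K_1$, so Theorem~\ref{ii} applies unchanged.

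First I would record that Proposition~\ref{mp2} supplies exactly the mountain-pass data needed: a fixed endpoint $\overline u=u_0+z_0$ lying in $\{E_\lambda<0\}$ with $L(\overline u)>\frac{2\pi}{\lambda\|K\|_\infty}$ for every $\lambda\in[\alpha,\beta]$ (this is the content of Lemma~\ref{L:negative}), and a mountain-pass level $c_\lambda$ bounded below by $\frac{\pi}{\lambda\|K\|_\infty}>0$. Thus $c_\lambda$ is a positive, finite level and $0$, $\overline u$ are admissible endpoints simultaneously for all $\lambda$ in the interval. With these facts in hand I would simply invoke Lemma~\ref{L:bound} and Lemma~\ref{L:Meps} with the present $E_\lambda$, $G$, $\Gamma$, and $c_\lambda$: for $\lambda\in\Lambda_{\alpha,\beta}$ the Denjoy property $(D)$ holds by definition of $\Lambda_{\alpha,\beta}$, so Lemma~\ref{L:bound} produces paths $\gamma_n\in\Gamma$ whose high-energy portions have uniformly bounded length, and Lemma~\ref{L:Meps} then yields, for each $\varepsilon>0$, a point in the set $M_\varepsilon=\{u:L(u)\le C_0+1,\ |E_\lambda(u)-c_\lambda|\le\varepsilon\}$ with $\inf_{M_\varepsilon}\|E'_\lambda\|=0$. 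Taking $\varepsilon=\varepsilon_n\to0^+$ and selecting $u_n\in M_{\varepsilon_n}$ with $\|E'_\lambda(u_n)\|\to0$ gives the desired PS sequence at level $c_\lambda$ with $\sup_n L(u_n)\le C_0+1<\infty$.

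The one point requiring a sentence of justification, and the only place where the proof is not a literal copy, is verifying that the \emph{hypotheses} of Lemmas~\ref{L:bound}--\ref{L:Meps} are met in this setting rather than in the periodic one. The deformation argument behind \eqref{eta1}--\eqref{eta3} in Lemma~\ref{L:Meps} uses only that $E_{\lambda}$ is $C^1$ away from constants (Proposition~\ref{P:regularity}, which assumes merely continuity of $K$) and that the endpoints $0$ and $\overline u$ sit strictly below the threshold $c_\lambda-\varepsilon_0$; the former holds verbatim and the latter follows from $E_\lambda(0)=0<\tfrac{c_\lambda}{2}$ and $E_\lambda(\overline u)<0$ via Lemma~\ref{L:negative}. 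The isoperimetric estimate used in \eqref{leq} holds because, as noted above, $G=K_0G_0+G_1$ is precisely the functional to which Theorem~\ref{ii} applies. Hence I expect no genuine obstacle: the main (mild) difficulty is purely bookkeeping, namely checking that every quantitative input invoked by Lemmas~\ref{L:bound} and \ref{L:Meps} ($c_\lambda$ positive and finite, common mountain-pass endpoints, the isoperimetric bound on $G$) is furnished here by Proposition~\ref{mp2}, Lemma~\ref{L:negative}, and Theorem~\ref{ii} respectively. The statement therefore follows, and I would phrase the proof as ``Arguing exactly as in the proof of Proposition~\ref{P:PS-bound}, with Proposition~\ref{mp2} and Lemma~\ref{L:negative} replacing Proposition~\ref{P:mp} and Lemma~\ref{L:rect}, one obtains the claim.''
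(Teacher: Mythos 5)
Your proposal is correct and follows exactly the route the paper takes: the paper proves Lemma~\ref{L:PSseq} simply by ``arguing as for Proposition~\ref{P:PS-bound}'', i.e.\ by rerunning Lemmas~\ref{L:bound} and~\ref{L:Meps} with the mountain-pass data now supplied by Lemma~\ref{L:negative} and Proposition~\ref{mp2} and with the isoperimetric bound of Theorem~\ref{ii} applied to $G=K_0G_0+G_1$. Your explicit verification that these are the only inputs consumed by the abstract machinery is precisely the (implicit) content of the paper's one-line proof.
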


As a next step, we analize the behaviour of the PS sequences given by Lemma \ref{L:PSseq}.

\begin{Lemma}
Fix $\lambda\in\Lambda_{\alpha,\beta}$ and let $(u_n)$ be the PS sequence for $E_{\lambda}$ given by Lemma \ref{L:PSseq}.  For every $n\in\mathbb{N}$, set
\[
\overline{u}_n:=\int_0^1u_n(t)\,dt\quad\text{and}\quad v_n:=u_n-\overline{u}_n\,.
\]
\begin{itemize}
\item[$(i)$]
If $\liminf|\overline{u}_{n}|<\infty$, then there exists $u\in H^1\setminus\mathbb{R}^2$ such that, up to a subsequence, $u_n\to u$ strongly in $H^1$ and $u$ is a $\lambda K$-loop.
\item[$(ii)$]
If $|\overline{u}_{n}|\to\infty$, then there exists $v\in H^1\setminus\mathbb{R}^2$ such that, up to a subsequence, $v_n\to v$ strongly in $H^1$ and $v$ is a $\lambda K_0$-loop. 
\end{itemize}
Moreover, if $c_{\lambda}<\frac\pi{\left|\lambda K_{0}\right|}$, then only the case $(i)$ can occur. 
\end{Lemma}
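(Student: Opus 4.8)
The plan is to reduce both alternatives to the compactness Lemma~\ref{L:compactness}, using that $L$ and $G_0$ are invariant under the addition of constants while $G_1$ and $K_1$ are not; the distinction between $(i)$ and $(ii)$ comes precisely from whether the curve stays in a bounded region or escapes to infinity. First I would record the a priori bounds. Since $\sup_n L(u_n)<\infty$ and $v_n$ has zero mean with $\dot v_n=\dot u_n$, the Poincar\'e--Wirtinger and Sobolev inequalities give that $(v_n)$ is bounded in $H^1$ and $\|v_n\|_\infty\le M$ for some $M$ independent of $n$; up to a subsequence $v_n\rightharpoonup v$ weakly in $H^1$. Moreover $L(u_n)=L(v_n)$, $G_0(u_n)=G_0(v_n)$, and $\mathrm{range}(u_n)\subset\overline{D_M(\overline u_n)}$.

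Case $(i)$. Along a subsequence $\overline u_n\to\overline u\in\mathbb{R}^2$, so $u_n\rightharpoonup v+\overline u=:u$ weakly in $H^1$. Since $E_\lambda$ is here the prescribed-curvature functional for the bounded continuous $K=K_0+K_1$ (so that \eqref{E'} holds), Lemma~\ref{L:compactness} yields $u_n\to u$ strongly. By continuity $E_\lambda(u)=c_\lambda$ and $E'_\lambda(u)=0$; as $c_\lambda\ge\frac{\pi}{\lambda\|K\|_\infty}>0$ (Proposition~\ref{mp2}) and constants have zero energy, $u$ is nonconstant, hence a $\lambda K$-loop by Proposition~\ref{P:regularity}.

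Case $(ii)$. The key observation is that the whole curve escapes to infinity: $|u_n(t)|\ge|\overline u_n|-M\to\infty$ uniformly in $t$, so by $(K4)$ one has $\|K_1(u_n)\|_\infty\to0$. I would use this twice. First, writing $E_\lambda(u_n)=E_{0,\lambda}(v_n)+\lambda G_1(u_n)$ and representing $G_1$ through \eqref{KirschLaurain}--\eqref{Rado} for the curvature $K_1$ (by the approximation argument of Theorem~\ref{ii}), the fact that $\mathrm{Ind}_{u_n}$ is supported in $\overline{D_M(\overline u_n)}$ while $\int_{\mathbb{R}^2}|\mathrm{Ind}_{u_n}|\le\frac1{4\pi}L(u_n)^2$ gives $|G_1(u_n)|\le\sup_{\overline{D_M(\overline u_n)}}|K_1|\cdot\frac1{4\pi}L(u_n)^2\to0$, whence $E_{0,\lambda}(v_n)\to c_\lambda$. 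Second, by \eqref{E'},
\[
E'_{0,\lambda}(v_n)[h]=E'_\lambda(u_n)[h]-\lambda\int_0^1K_1(u_n)\,h\cdot i\dot v_n\,dt,
\]
and the last integral is bounded by $\|K_1(u_n)\|_\infty\,\|h\|\,L(v_n)\to0$ uniformly for $\|h\|\le1$; hence $\|E'_{0,\lambda}(v_n)\|\to0$. Thus $(v_n)$ is a bounded PS sequence for $E_{0,\lambda}$ at level $c_\lambda$. Since $E_{0,\lambda}$ is the prescribed-curvature functional for the constant $K_0$ and the proof of Lemma~\ref{L:compactness} only uses boundedness of the curvature, it applies to give $v_n\to v$ strongly; as before $v$ is nonconstant with $E'_{0,\lambda}(v)=0$, i.e.\ a $\lambda K_0$-loop.

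For the final assertion I would compute the energy of $\lambda K_0$-loops directly. Such a loop is a circle of radius $1/|\lambda K_0|$ run $k\ge1$ times, for which $L=2\pi k/|\lambda K_0|$ and $\lambda K_0G_0=-k\pi/|\lambda K_0|$, so $E_{0,\lambda}=k\pi/|\lambda K_0|\ge\pi/|\lambda K_0|$. In Case $(ii)$ strong convergence gives $E_{0,\lambda}(v)=c_\lambda$, hence $c_\lambda\ge\pi/|\lambda K_0|$, contradicting $c_\lambda<\pi/|\lambda K_0|$; therefore only Case $(i)$ can occur. I expect the main obstacle to be the two decay estimates in Case $(ii)$---controlling $G_1(u_n)$ and the remainder in $E'_{0,\lambda}(v_n)$---which hinge on combining the uniform $C^0$-bound on $v_n$ with $|\overline u_n|\to\infty$ and the vanishing of $K_1$ at infinity; everything else is a packaging of results already in hand.
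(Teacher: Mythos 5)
Your proposal is correct and follows essentially the same route as the paper: case $(i)$ is a direct application of Lemma \ref{L:compactness}, and in case $(ii)$ you show, exactly as the authors do, that $(v_n)$ is a bounded PS sequence for $E_{0,\lambda}$ (using $\|K_1(u_n)\|_\infty\to 0$ for the derivative remainder and the index representation \eqref{KirschLaurain}--\eqref{Rado} to kill $G_1(u_n)$), then identify the limit as a multiply covered circle of radius $1/|\lambda K_0|$ with energy $k\pi/|\lambda K_0|\ge\pi/|\lambda K_0|$, which rules out case $(ii)$ under the stated energy bound. The only cosmetic difference is the order in which the two decay estimates are established.
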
 
\begin{proof} (i) If $\liminf|\overline{u}_{n}|<\infty$, then $(u_{n})$ admits a bounded subsequence in $H^{1}$ and then a subsequence weakly converging to some $u$. In this case, the conclusion follows from Lemma \ref{L:compactness}. 

(ii) Observe that $(v_{n})$ is bounded in $H^{1}$ and
\[
E'_{\lambda}(u_{n})[h]=E'_{0,\lambda}(v_{n})[h]+\lambda\int_{0}^{1}K_{1}(v_{n}+\overline{u}_{n})h\cdot i\dot{u}_{n}\,dt
\]
and 
\begin{equation}\label{restoE'}
\int_{0}^{1}|K_{1}(v_{n}+\overline{u}_{n})\dot{u}_{n}|\,dt\to 0
\end{equation}
Indeed, observe that $(\dot{u}_{n})$ is bounded in $L^{1}$, $(v_{n})$ is bounded in $C^{0}$, and $|\overline{u}_{n}|\to\infty$. Hence \eqref{restoE'} follows from $(K4)$.
Therefore, by the embedding $H^{1}\hookrightarrow C^{0}$,
\[
\|E'_{0,\lambda}(v_{n})\|\le\|E'_{\lambda}(u_{n})\|+C\int_{0}^{1}|K_{1}(v_{n}+\overline{u}_{n})\dot{u}_{n}|\,dt
\]
which yields $\|E'_{0,\lambda}(v_{n})\|\to 0$. Moreover, for a subsequence, $v_{n}$ converges weakly in $H^{1}$ to some $v\in H^{1}$. By Lemma \ref{L:compactness} (applied to $E_{0,\lambda}$), $v_{n}\to v$ strongly in $H^{1}$. In particular, $v$ is a 1-periodic solution of
\begin{equation}
\label{v-eq}
\ddot v=L(v)\lambda K_{0}i\dot v
\end{equation}
with 
\begin{equation}
\label{null-ave}
\int_{0}^{1}v\,dt=0
\end{equation}
because $\int_{0}^{1}v_{n}\,dt=0$ for every $n$. The case $v\in\mathbb{R}^2$ cannot occur, otherwise $L(u_{n})=L(v_{n})\to L(v)=0$. By \eqref{ii} also $G(u_{n})\to 0$ and thus $E_{\lambda}(u_{n})\to 0$, which is impossible, since $c_{\lambda}\ne 0$. Hence, $v$ is a $\lambda K_{0}$-loop. More precisely, by an elementary argument, one can easily obtain that the nonconstant 1-periodic solutions of \eqref{v-eq}--\eqref{null-ave} are 
\[
v(t)=\frac{e^{2\pi i jt+it_0}}{|\lambda K_0|}
\]
with $j\in\mathbb{Z}\setminus\{0\}$ and $t_0\in\mathbb{R}$. In particular the sign of $j$ must be equal to the sign of $\lambda K_0$ and 
\begin{equation}\label{en0}
E_{0,\lambda}(v)=\frac{j\pi}{\lambda K_0}\,.
\end{equation}
Using \eqref{KirschLaurain}, noticing that $\mathrm{range}(u_n)\subset D_\rho(\overline{u}_n)$ for some $\rho>0$ independent of $n$ and recalling that the index of a curve $\mathcal{C}_u=\mathrm{range}(u)$ vanishes on the unbounded component of $\mathbb{R}^2\setminus\mathcal{C}_u$, we can estimate
\[
G_1(u_n)=-\int_{\mathbb{R}^2}\mathrm{Ind}_{u_n}(z)K_1(z)\,dz=-\int_{D_\rho(\overline{u}_n)}\mathrm{Ind}_{u_n}(z)K_1(z)\,dz\,.
\]
Then, by \eqref{Rado},
\[
\left|G_1(u_n)\right|\le\sup_{z\in D_\rho(\overline{u}_n)}|K_1(z)|\int_{\mathbb{R}^2}|\mathrm{Ind}_{v_n}(z)|\,dz\le\frac{1}{4\pi}\sup_{z\in D_\rho(\overline{u}_n)}|K_1(z)|\left(\int_0^1|\dot{v}_n|\,dt\right)^2=o(1)
\]
thanks to (K4). Thence, since $v_n\to v$ in $H^1$ and by \eqref{en0},
\[
c_\lambda=E_\lambda(u_n)+o(1)=E_{0,\lambda}(v_n)+\lambda G_1(u_n)+o(1)=E_{0,\lambda}(v)=\frac{j\pi}{\lambda K_0}\,.
\]
This proves the last statement. 
\end{proof}

Finally, we show:

\begin{Lemma}\label{LK5}
If (K5) holds, then $c_{\lambda}<\frac\pi{\left|\lambda K_{0}\right|}$.
\end{Lemma}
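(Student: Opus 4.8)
The plan is to improve the upper estimate $c_\lambda\le\pi/(\lambda|K_0|)$ of Proposition \ref{mp2} to a strict inequality, by producing one path in $\Gamma$ along which $E_\lambda$ stays strictly below $\pi/(\lambda|K_0|)$. As in this Section I may assume $\lambda>0$, and I treat $K_0>0$ (the case $K_0<0$ being symmetric, with the clockwise loop $u_0(t)=r_0e^{-2\pi it}$). I keep the data $r_0,R,u_0,z_0,\overline u=u_0+z_0$ fixed by Lemma \ref{L:negative} and Proposition \ref{mp2}. The whole idea is to inflate the loop not at an arbitrary far-away centre but inside the sector where $K_0K_1>0$, so that the sign of the $G_1$-contribution works in our favour.

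First I would use (K5) to place a good disc. Choosing an angle $\theta_0$ interior to the open interval $I$ and setting $z_*=\rho_*e^{i\theta_0}$, any $w$ with $|w-z_*|\le r_0$ satisfies $|w|\ge\rho_*-r_0$ and $|\arg w-\theta_0|\le\arcsin(r_0/\rho_*)$; since $I$ is open and $\arcsin(r_0/\rho_*)\to0$, for $\rho_*$ large enough (and $\ge R$) the closed disc $D_{r_0}(z_*)$ lies in $\{\,|z|>r,\ \arg z\in I\,\}$, so that $K_1>0$ there. Consequently
\[
\Phi(\rho):=\int_{D_\rho(z_*)}K_1(x,y)\,dx\,dy>0\qquad\text{for every }\rho\in(0,r_0].
\]
I would then take the three-piece path $\gamma=\gamma_1\cup\gamma_2\cup\gamma_3\in\Gamma$ of Proposition \ref{mp2}, but centred at $z_*$: $\gamma_1$ runs through constants from $0$ to $z_*$ (so $E_\lambda\equiv0$); $\gamma_2(s)=z_*+su_0$ inflates the loop from radius $0$ to $r_0$; and $\gamma_3(s)=g(s)+u_0$ slides the radius-$r_0$ loop from $z_*$ to $z_0$ along a curve $g$ with $|g(s)|\ge R$, which exists since $\{|z|\ge R\}$ is path-connected. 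By \eqref{eq:negative}, $E_\lambda<0$ on $\gamma_3$, and $E_\lambda\equiv0$ on $\gamma_1$, so the maximum of $E_\lambda$ over $\gamma$ is attained on $\gamma_2$.

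Finally I would read off the strict bound. From \eqref{Eg2} (computed at centre $z_*$),
\[
E_\lambda(\gamma_2(s))=f(sr_0)-\lambda\Phi(sr_0),\qquad f(\rho):=2\pi\rho-\lambda K_0\pi\rho^2,
\]
and $f(\rho)\le\max_{\rho\ge0}f=\pi/(\lambda K_0)$. Let $s^*$ realize $\max_{s\in[0,1]}E_\lambda(\gamma_2(s))$: if $s^*=0$ the value is $0<\pi/(\lambda K_0)$, while if $s^*>0$ then $\Phi(s^*r_0)>0$ gives $E_\lambda(\gamma_2(s^*))=f(s^*r_0)-\lambda\Phi(s^*r_0)<\pi/(\lambda K_0)$. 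Hence $\max_{s\in[0,1]}E_\lambda(\gamma(s))<\pi/(\lambda K_0)$, so $c_\lambda<\pi/(\lambda K_0)=\pi/|\lambda K_0|$. The only genuinely delicate step is the first one: confirming that a disc of the fixed radius $r_0$, pushed far along a ray interior to $I$, eventually fits inside the angularly widening region $\{\,|z|>r,\ \arg z\in I\,\}$ — this is exactly the $\arcsin(r_0/\rho_*)\to0$ estimate. Everything else is the bookkeeping already carried out for the upper bound in Proposition \ref{mp2}, now turned into a strict inequality by the single new fact $\Phi>0$.
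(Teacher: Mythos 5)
Your proof is correct and follows essentially the same route as the paper's: place the disc $D_{r_0}(z_*)$ inside the sector from (K5) so that $K_1>0$ there, run the same three-piece path, and extract the strict inequality from the positive contribution of $\int_{D_{sr_0}(z_*)}K_1$. Your extra details (the $\arcsin(r_0/\rho_*)\to 0$ placement argument and the explicit treatment of the case where the maximum on $\gamma_2$ sits at $s^*=0$) only make explicit what the paper leaves implicit.
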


\begin{proof} Suppose $K_{0}>0$. Fix an interval $[\alpha,\beta]\subset(0,\infty)$ containing $\lambda$. Let $r_{0},R>0$ be given by Lemma \ref{L:negative}. 
Let $\Omega:=\{z\in\mathbb{R}^{2}\colon |z|>r\,,~\mathrm{arg}(z)\in I\}$, where $r>0$ and $I\subset\mathbb{S}^{1}$ are given according to the assumption (K5). Let $z_{1}\in\mathbb{R}^{2}$ be such that $D_{r_{0}}(z_{1})\subset\Omega\cap\{z\in\mathbb{R}^{2}\colon|z|\ge R\}$. Finally, let $\gamma=\gamma_1\cup\gamma_2\cup\gamma_3$ with $\gamma_1=[0,z_1]$, $\gamma_2(s)=z_1+su_0$ ($s\in[0,1]$) and $\gamma_3(s)=g(s)+u_0$ with $g\in C([0,1],\mathbb{R}^2)$ such that $g(0)=z_1$, $g(1)=z_{0}$ and $|g(s)|\ge R$ for every $s\in[0,1]$. By computations like \eqref{Eg1}--\eqref{Eg3}, one obtains that $\gamma\in\Gamma$ and 
\[
c_{\lambda}\le\max_{s\in[0,1]}E_{\lambda}(\gamma(s))=
\max_{s\in[0,1]}E_{\lambda}(\gamma_{2}(s))<\max_{s\in[0,1]}E_{0,\lambda}(\gamma_{2}(s))=\frac\pi{\lambda K_{0}}
\]
where the strict inequality is due to the fact that, by $(K5)$, $K_{1}>0$ in $D_{sr_{1}}(z_{0})$ for all $s\in[0,1]$. 
\end{proof}

\noindent
\emph{Proof of Theorem \ref{T:2}.} Fix an interval $[\alpha,\beta]\subset(0,\infty)$. By the Denjoy-Young-Saks Theorem \cite{Sak64}, the set $\Lambda_{\alpha,\beta}$ (defined in \eqref{LambdaD})) has full measure in $[\alpha,\beta]$. Then, by Lemmata \ref{L:PSseq}--\ref{LK5}, for a.e. $\lambda\in[\alpha,\beta]$ there exists a $\lambda K$-loop. Similarly, when $[\alpha,\beta]\subset(-\infty,0)$. The conclusion follows from the arbitrariness of $[\alpha,\beta]$ in $\mathbb{R}\setminus\{0\}$.\hfill$\square$
\medskip

One can recognize that in fact we proved:

\begin{Theorem}\label{T3}
Let $K\colon\mathbb{R}^{2}\to\mathbb{R}$ be a continuous function satisfying $(K4)$. For every $\lambda\in\mathbb{R}\setminus\{0\}$ let $E_{\lambda}$ and $c_{\lambda}$ be defined as in \eqref{E0l} and \eqref{mpEl}, respectively. Then 
\[
c_{\lambda}\le\frac{\pi}{\lambda\left|K_0\right|}~~\forall\lambda\in\mathbb{R}\setminus\{0\}
\]
and $c_{\lambda}$ is a critical value of $E_{\lambda}$ (hence a $\lambda K$-loop exists) for a.e. $\lambda$ such that $c_{\lambda}<\frac{\pi}{\lambda\left|K_0\right|}$. \end{Theorem}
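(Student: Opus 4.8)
The plan is to read Theorem \ref{T3} as a distillation of the results already assembled in this section, organized entirely around the energy threshold $\frac{\pi}{\lambda|K_0|}$. By the reduction $K\mapsto -K$ we may assume throughout that $\lambda>0$. The first assertion is then nothing more than the upper bound of Proposition \ref{mp2}: for fixed $\lambda>0$ I would pick any interval $[\alpha,\beta]\subset(0,\infty)$ containing $\lambda$ and invoke that proposition directly. Its proof constructs an explicit admissible path — travel out to a far point $z_\varepsilon$, blow up a circle $u_0$ of radius $r_0$, then slide the inflated circle back to $\overline{u}$ — whose maximal energy does not exceed $\frac{\pi}{\lambda K_0}+\varepsilon$, because by (K4) the correction $K_1$ is uniformly small on discs of radius $r_0$ centered beyond $R_\varepsilon$. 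Letting $\varepsilon\to0$ yields $c_\lambda\le\frac{\pi}{\lambda|K_0|}$ for every $\lambda\ne0$.

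For the second assertion I would first produce bounded Palais--Smale sequences on a full-measure set. Fixing $[\alpha,\beta]\subset(0,\infty)$, the Denjoy--Young--Saks theorem \cite{Sak64} applied to $\lambda\mapsto c_\lambda$ guarantees that the set $\Lambda_{\alpha,\beta}$ defined in \eqref{LambdaD}, consisting of the $\lambda$ for which the Denjoy property $(D)$ holds, has full measure in $[\alpha,\beta]$. For each such $\lambda$, Lemma \ref{L:PSseq} supplies a PS sequence $(u_n)\subset H^1\setminus\mathbb{R}^2$ for $E_\lambda$ at level $c_\lambda$ with $\sup_n L(u_n)<\infty$.

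Next I would run the compactness dichotomy established above. Writing $\overline{u}_n=\int_0^1 u_n\,dt$, either $\liminf|\overline{u}_n|<\infty$, in which case $(u_n)$ is bounded in $H^1$ and Lemma \ref{L:compactness} yields strong convergence to a nonconstant critical point of $E_\lambda$, i.e.\ a $\lambda K$-loop at level $c_\lambda$; or $|\overline{u}_n|\to\infty$. In the escaping case the remainder $\int_0^1|K_1(v_n+\overline{u}_n)\dot u_n|\,dt\to0$ by (K4), so the recentered sequence $v_n=u_n-\overline{u}_n$ is asymptotically critical for the constant-curvature functional $E_{0,\lambda}$; applying Lemma \ref{L:compactness} to $E_{0,\lambda}$ forces $v_n\to v$ strongly, with $v$ a round circle of radius $|\lambda K_0|^{-1}$ traversed $|j|$ times, and the winding-number estimate \eqref{Rado} gives $G_1(u_n)\to0$. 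Passing to the limit then pins the energy to the quantized value $c_\lambda=\frac{j\pi}{\lambda K_0}$ with $j\ge1$, so that $c_\lambda\ge\frac{\pi}{\lambda|K_0|}$. Consequently the strict inequality $c_\lambda<\frac{\pi}{\lambda|K_0|}$ excludes the escaping alternative, leaving only the compact one, so $c_\lambda$ is a genuine critical value and a $\lambda K$-loop exists; since this holds for every $\lambda$ in the full-measure set $\Lambda_{\alpha,\beta}$ meeting the strict inequality, and $[\alpha,\beta]\subset(0,\infty)$ is arbitrary (with $\lambda<0$ symmetric), the claim follows.

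The decisive point, and the only real work beyond bookkeeping, is this energy quantization: the escaping circle at infinity can form only when $c_\lambda$ sits exactly at or above the threshold $\frac{\pi}{\lambda|K_0|}$. I expect the main obstacle to lie in controlling that runaway circle rigorously, namely in verifying both that the limit $v$ cannot be constant (otherwise $L(u_n)=L(v_n)\to0$ would force $E_\lambda(u_n)\to0\ne c_\lambda$) and that the nonlinear coupling $G_1(u_n)$ genuinely vanishes in the limit, which is precisely where the decay of $K_1$ at infinity together with the uniform confinement of $\mathrm{range}(u_n)$ in a fixed-radius disc about $\overline{u}_n$ — hence the boundedness of the winding number — become indispensable.
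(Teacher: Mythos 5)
Your proposal is correct and follows essentially the same route as the paper: Theorem \ref{T3} is presented there as a synthesis of Proposition \ref{mp2} (the upper bound $c_\lambda\le\pi/(\lambda|K_0|)$), the Denjoy--Young--Saks theorem giving full measure to $\Lambda_{\alpha,\beta}$, Lemma \ref{L:PSseq}, and the dichotomy lemma whose energy-quantization conclusion $c_\lambda=j\pi/(\lambda K_0)$ in the escaping case is exactly what you use to exclude that alternative under the strict inequality. The points you flag as the real work (nonconstancy of the limit circle, vanishing of $G_1(u_n)$ via \eqref{KirschLaurain}--\eqref{Rado} and the uniform confinement of $\mathrm{range}(u_n)$) are precisely the ones the paper's lemma handles.
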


\end{document}